\documentclass[9pt,a4paper,reqno]{amsart}
\usepackage{mathtools}
\usepackage{hyperref}
\usepackage{tikz-cd}
\hypersetup{colorlinks,citecolor=blue,filecolor=black,linkcolor=black,urlcolor=blue} 
\usepackage[margin=4cm]{geometry}

\usepackage{xcolor}

\usepackage[backend=bibtex]{biblatex}
\newcommand{\R}{{\mathbb R}}
\newcommand{\CC}{{\mathbb C}}
\newcommand{\HH}{{\mathbb H}}
\newcommand{\W}{\mathcal{W}}

\addbibresource{biblio.bib}

\theoremstyle{plain}
\newtheorem{theorem}{Theorem}[section]
\newtheorem{proposition}[theorem]{Proposition}
\newtheorem{lemma}[theorem]{Lemma}

\theoremstyle{definition}
\newtheorem{definition}[theorem]{Definition}
\newtheorem*{question}{Question}

\theoremstyle{remark}
\newtheorem{remark}[theorem]{Remark}
\newtheorem{example}[theorem]{Example}

\newcommand{\rk}[1]{\mathrm{rank}({#1})}

\title{GKM actions on cohomogeneity one manifolds}

\begin{document}

\author{Oliver Goertsches}

\address{Philipps-Universit\"at Marburg\\
Fachbereich Mathematik und Informatik\\
Hans-Meerwein-Strasse~6\\
35032 Marburg\\
Germany}

\email{goertsch@mathematik.uni-marburg.de}

\author{Eugenia Loiudice} 

\address{Universit\"at Greifswald\\
Institut f\"ur Mathematik und Informatik\\
Walther-Rathenau-Str. 47\\
17489 Greifswald\\
Germany}

\email{eugenia.loiudice@uni-greifswald.de}

\author{Giovanni Russo}

\address{Universit\`a degli Studi dell'Aquila \\
Dipartimento di Ingegneria e Scienze dell'Informazione e Matematica\\
via Vetoio 1 \\
67100 L'Aquila\\
Italy}

\email{giovanni.russo1@univaq.it}

\begin{abstract}
We consider compact manifolds $M$ with a cohomogeneity one action of a compact Lie group $G$ such that the orbit
space $M/G$ is a closed interval.
For $T$ a maximal torus of $G$, we find necessary and sufficient conditions on the group diagram of $M$ such that
the $T$-action on $M$ is of GKM type, and describe its GKM graph. 
The general results are illustrated on explicit examples.
\end{abstract}

\maketitle

\tableofcontents

\section*{Introduction}
\label{introduction}

Consider a compact connected orientable smooth manifold $M$ 
with the action of a compact torus $T$ and vanishing 
cohomology in odd degree (either with real or rational coefficients).
The $T$-manifold $M$ is called a \emph{GKM manifold} if the orbit space $M_1/T$ 
of the union $M_1$ of at most one-dimensional orbits is homeomorphic to a graph. 
This graph, equipped with a labelling encoding the isotropy groups on $M_1$, 
is called \emph{GKM graph}.

The theory of GKM manifolds is named after Goresky--Kottwitz--MacPherson \cite{gkm}.
They observed that the Chang--Skjelbred Lemma \cite[Lemma 2.3]{chang_lemma} yields a 
combinatorical description of the (equivariant) cohomology ring $H_T^*(M)$ for such $T$-spaces 
completely determined by the GKM graph, often called the \emph{GKM description} of the (equivariant) cohomology.
One can regard GKM manifolds as generalisations of toric or quasi-toric manifolds, 
with actions of tori whose dimension is independent of that of $M$.  

A natural question is how additional topological or geometric structure is encoded in the GKM graph of a GKM manifold. 
For example, isometric GKM actions on Riemannian manifolds with positive or non-negative sectional curvature were considered by Goertsches--Wiemeler \cite{oli_wiemeler_2, oli_wiemeler_1}. For special properties of GKM actions on almost complex, symplectic or K\"ahler manifolds see Goertsches--Konstantis--Zoller \cite[Section 2]{gkz}. Most relevant for this paper are results by Guillemin--Holm--Zara \cite{guillemin_holm_zara, guillemin_zara}, who considered the case that the GKM action extends to a transitive action of a compact Lie group. More specifically, one has the following criterion \cite[Theorem 1.1]{guillemin_holm_zara}: 
let $G$ be a compact, semisimple, connected Lie group $G$ acting transitively on a manifold $M$. 
Then $M$ is a GKM manifold for the action of a maximal torus $T\subset G$ if and only if its Euler characteristic is non-zero or, equivalently, when $M = G/K$ for some closed subgroup $T\subset K \subset G$ of maximal rank. Moreover, the authors compute the GKM graph for the $T$-action. Its vertex set is in one-to-one correspondence with the quotient of Weyl groups $\W(G)/\W(K)$, and it is acted on in a natural way by $\W(G)$. Also, it is shown how the existence of $G$-invariant (almost) complex structures on $M$ can be deduced by properties of 
the graph \cite[Theorem 1.2, Theorem 1.3]{guillemin_holm_zara}. 

As the homogeneous case is now understood, we study the following
\begin{question}
Given a compact \emph{cohomogeneity one} $G$-manifold $M$, 
when does a maximal torus $T\subset G$ act on $M$ in a GKM fashion? Further, if the action is of GKM type, what is the structure of the corresponding GKM graph?
\end{question}
In order for the $T$-action to be of GKM type, the orbit space $M/G$ is necessarily a closed interval (Remark \ref{orbit_type},
cf.\ the classification by Mostert \cite{mostert}), so that all but two $G$-orbits are principal, 
and $M$ is determined by a group diagram.

One of the GKM conditions is that the odd cohomology of $M$ vanishes. 
This condition was understood for cohomogeneity one manifolds in \cite[Corollary 1.3, 1.4]{oliver_mare_1}, in terms of the ranks of the occurring isotropy groups. In particular, one or both of the non-principal orbits have to be equal rank homogeneous spaces, 
so they are themselves GKM manifolds, as recalled above \cite{guillemin_holm_zara}.
To understand the remaining GKM conditions, we need to combine this result with information 
on the weights of the slice representation at $T$-fixed points. 
We prove general statements on cohomogeneity one representations (Proposition \ref{prop:cohom1repweylgroup}), 
in order to obtain as our main result (Theorem \ref{thm:gkmconditioncohom1}) an easy-to-check criterion on the group diagram of the cohomogeneity one manifold to 
determine when the action of a maximal torus of $G$ is GKM.
Equivalently, these are conditions for the $G$-action to satisfy the non-abelian GKM conditions as in 
Goertsches--Mare \cite{goertsches_mare}. Notice also that the question above was considered for a variant of GKM theory in odd dimensions in \cite[Section 5.4]{He}.

We give concrete recipes to determine the GKM graph, 
depending on whether one or two of the non-principal orbits are equal rank homogeneous spaces.
We then illustrate our theory by applying it to numerous examples from various sources \cite{ale_pode, hoelscher, podesta_spiro}, 
including cohomogeneity one manifolds with a fixed point, and of dimension up to six.

The paper is organised as follows. In Section \ref{defs} we recall what we need about the theory of cohomogeneity one manifolds and GKM actions.
We also recall some of the statements in Guillemin--Holm--Zara \cite{guillemin_holm_zara} which will be used for our descriptions.
In Section \ref{core} we prove as our main result a characterisation of GKM actions 
in terms of the group diagram of the manifold. Section \ref{gkm_graph_sec} is dedicated to the structure of the GKM graphs of cohomogeneity one actions. Finally, Section \ref{sec:examples}
contains explicit examples of GKM actions on cohomogeneity one manifolds.

\section{Preliminaries}
\label{defs}

We start by recalling the definition of a cohomogeneity one $G$-manifold, the correspondence with group diagrams,
the definition of Weyl group of a cohomogeneity one $G$-action, and the twist of a normal geodesic. Next, we give a 
precise definition of GKM action of a torus on a manifold and discuss some basic general properties. 
Finally we recall some relevant results on the homogeneous case
that we will need in the other sections.

\subsection{Cohomogeneity one manifolds} 
\label{subsec:cohom1}

A \emph{cohomogeneity one} $G$-manifold is a smooth manifold $M$ with the action of a Lie group $G$ 
admitting an orbit of codimension one \cite{ale_ale}.
We take $G$ to be compact and connected, so that $M$ admits a $G$-invariant Riemannian metric. 
We also assume $M$ compact and connected, thus the quotient $M/G$ is a one-dimensional topological Hausdorff space homeomorphic to either a closed interval
$I$ or a circle \cite{mostert}. In the following we always assume $M/G = I$
(cf.\ Remark \ref{orbit_type}). 
We have two non-principal $G$-orbits 
on $M$ corresponding to the end-points of $I$ via the canonical projection
$M \to I$, and the remaining orbits are principal.

Recall that a \emph{group diagram} is a quadruple of compact Lie groups $(G,K^+,K^-, H)$, with $H \subset K^\pm \subset G$, 
such that the quotient spaces $K^\pm/H$ are diffeomorphic to spheres. 
One can associate (not uniquely) a group diagram to any cohomogeneity one manifold in the following way: 
fix an auxiliary $G$-invariant Riemannian metric on $M$, as well as a geodesic $\gamma \colon [0,1]\to M$
running orthogonally to the $G$-orbits, meeting every orbit exactly once, such that $G\cdot \gamma(0)$ and $G\cdot \gamma(1)$ are the two non-regular orbits.
We let $K^+\coloneqq G_{\gamma(0)}$ and $K^-\coloneqq G_{\gamma(1)}$ be the stabilisers of the end-points of $\gamma$, 
and set $H\coloneqq G_{\gamma(t)}$ for $0<t<1$ to be the stabiliser of all other points of $\gamma$ (which is independent of $t$). 
Then $(G,K^+, K^-, H)$ is a group diagram \cite[Theorem 7.1]{ale_ale}.

Conversely, a group diagram $(G,K^+,K^-,H)$ defines a cohomogeneity one $G$-manifold $M$. 
To see this, recall that any transitive Lie group action on a sphere is linear \cite[Theorem 10.1, Chapter II]{bredon}.
This means that there are orthogonal cohomogeneity one $K^\pm$-representations on real vector spaces $V^\pm$ with regular isotropy $H$. 
Then one constructs $M$ by equivariantly gluing the disc bundles $ G\times_{K^\pm} D^\pm$, where $D^\pm\subset V^\pm$ are unit discs. 

Let us move to the notion of \emph{Weyl group of an isometric cohomogeneity one action} of $G$ on a Riemannian manifold $M$ \cite{ale_ale, palais_terng}.  
Take a normal geodesic $\gamma$ on $M$. By definition, the Weyl group of this action is the quotient of the subgroup of $G$ sending $\gamma$ to itself by its pointwise stabiliser $H$. 
Thus the Weyl group can be identified with a subgroup of the quotient $N_G(H)/H$, where $N_G(H)$ is the normaliser
of $H$ in $G$. It can be shown that when $M/G$ is an interval, the Weyl group is a dihedral group
of even order generated by the only two elements of order two inside $N_{K^{\pm}}(H)/H$ \cite[Theorem 5.1]{ale_ale}.
Furthermore, the cardinality of the Weyl group is the number of minimal geodesic segments intersecting the regular part of $M$ \cite{ziller}, so that
the Weyl group is finite if and only if normal geodesics are closed.
The number of times the curve $\gamma$ hits the orbit $G/K^+$ is called the \emph{twist} of $\gamma$ \cite[Definition~5]{ale_ale}.

\subsection{GKM actions}\label{sec:gkmtheory} 

Consider the action of a compact torus $T=S^1\times \cdots \times S^1$ 
on a compact connected orientable manifold $M$. 
In the next sections we will consider $T$ to be a maximal torus in a Lie group $G$ 
acting on $M$ with cohomogeneity one.

\begin{definition}[Goresky--Kottwitz--MacPherson \cite{gkm}]
\label{defn:gkm}
A $T$-action on a compact connected orientable manifold $M$ is said to be of \emph{GKM type} if
\begin{enumerate}
\item the odd cohomology $H^{\mathrm{odd}}(M)$ vanishes,
\item the fixed point set $M^T$ is non-empty and finite,
\item the one-skeleton $M_1 \coloneqq \{p\in M: \dim(T\cdot p) \leq 1\}$ is a finite union of $T$-invariant two-spheres.
\end{enumerate}
In this case we also say that $M$ is a \emph{GKM manifold}.
\end{definition}
Throughout this paper we consider cohomology with either real or rational coefficients. 
For the first condition in Definition \ref{defn:gkm}, it is irrelevant which of these coefficient fields we choose. 
Notice that for $T$-actions with finite fixed point set, the vanishing of the odd cohomology of $M$ is equivalent 
to the equivariant formality of the action, i.e.\ the condition that the equivariant cohomology
$$H^*_T(M) \coloneqq H^*(M\times_T ET)$$
is a free $H^*(BT)$-module, where the module structure is induced by the projection map $M\times_T ET \to BT$.
Here $ET\to BT$ is the classifying bundle of $T$. 

An immediate consequence of Definition \ref{defn:gkm} is that the dimension of a GKM manifold is necessarily even: since the isotropy representation at $p \in M^T$ splits as 
\begin{equation}
\label{iso_rep}
T_pM = T_p(M^T) \oplus \bigoplus_{\alpha} V_{\alpha},
\end{equation}
where the weight spaces $V_{\alpha}$ are two-dimensional irreducible real
$T$-modules, the fixed point set always has even codimension. Now the second condition in the above definition implies $M$ has even dimension.

The third condition in Definition \ref{defn:gkm} can be reformulated in two ways.
Firstly, it is equivalent to the condition that the orbit space $M_1/T$ be homeomorphic to a graph. 
Secondly, it is equivalent to the condition 
\begin{enumerate}
\item[(3')] at each fixed point $p\in M^T$ the weights of 
the isotropy representation on $T_pM$ are pairwise linearly independent. 
\end{enumerate}
Here the weights are elements in ${\mathfrak{t}}^*/\pm 1$, i.e.\ linear forms on the Lie algebra ${\mathfrak{t}}$ 
of the torus $T$ which are well-defined up to sign 
(as we do not assume the existence of an invariant almost complex structure on $M$).

As mentioned above, the orbit space $M_1/T$ has the structure of a graph. 
The fixed points correspond to the vertices of the graph, and every $T$-invariant two-sphere yields an edge, 
connecting the two vertices given by the two fixed points in this sphere. We call this structure the \emph{GKM graph} 
of the $T$-action \cite{gkm, gz}, and we give it the following labelling.
For any $T$-invariant two-sphere $N\subset M$ we have a character $T\to T/H\cong S^1$, where $H$ is the kernel of the $T$-action on $N$. 
Its differential $\lambda \colon \mathfrak{t} \to \R$, viewed as an element of ${\mathfrak{t}}^*/\pm 1$, 
is a weight of the isotropy representations at both $T$-fixed points in $N$ and vanishes on $\mathfrak t \cap \mathfrak h$. 
We then label the edge $N/T$ with the weight $\lambda$.

The equivariant and ordinary cohomology ring, as well as the (equivariant) characteristic classes of a GKM manifold
are determined by its GKM graph. Explicitly, we have an isomorphism (cf.\ \cite[Theorem 7.2]{gkm})
\begin{equation}\label{eq:eqcohomgkm} H^*_T(M) = \left\{\left.(f_p)\in \bigoplus_{p\in M^T} H^*(BT) \, \right| \, \alpha \mid (f_p-f_q)\, \text{for every edge } pq  \text{ with label } \alpha \right\}
\end{equation}
realised by the natural restriction map $H^*_T(M)\to H^*_T(M^T)$. 
Here, we identify $H^*(BT)$ with the ring of polynomials on the Lie algebra ${\mathfrak{t}}$.

\subsection{The homogeneous case}

It was observed by Guillemin--Holm--Zara \cite{guillemin_holm_zara} that a homogeneous space $M=G/K$ of compact Lie groups
with $\rk{G} = \rk{K}$ is a GKM manifold with respect to the action of a maximal torus $T\subset K\subset G$.
Furthermore, they determined the GKM graph explicitly in terms of the root systems of $G$ and $K$. Let us recall their description of the graph. 

For a homogeneous space $M = G/K$ we have the following algebraic characterisation of the Euler characteristic $\chi(M)$ in terms
of the Weyl groups $\W(G)$ and $\W(K)$ of $G$ and $K$ (cf.\ \cite{wang}). 

\begin{proposition}
\label{euler_weyl}
Let $M = G/K$ be a homogeneous space.
\begin{enumerate}
\item If $\rk{G} = \rk{K}$ then $\chi(M) = |\W(G)|/|\W(K)|$.
\item If $\rk{G} \neq \rk{K}$ then $\chi(M) = 0$.
\end{enumerate}
\end{proposition}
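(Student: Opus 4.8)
The plan is to deduce both parts from the localization of the Euler characteristic under torus actions: for any smooth action of a compact torus $T$ on a compact manifold $X$ one has $\chi(X) = \chi(X^T)$. I would apply this with $T$ a maximal torus of $G$ acting on $M = G/K$ by left translations, so that everything reduces to understanding the fixed point set
\[
M^T = \{\, gK \in G/K : g^{-1}Tg \subseteq K \,\}.
\]

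For part (1), since $\rk{G} = \rk{K}$ a maximal torus of $K$ is already maximal in $G$, so I may choose $T$ with $T \subseteq K$. If $gK \in M^T$ then $g^{-1}Tg$ is a torus in $K$ of rank $\rk{K}$, hence a maximal torus of $K$; since all maximal tori of a compact Lie group are conjugate, there is $k \in K$ with $k(g^{-1}Tg)k^{-1} = T$, i.e.\ $gk^{-1} \in N_G(T)$. Thus $M^T$ is exactly the image of the map $N_G(T) \to G/K$, and $n, n' \in N_G(T)$ have the same image precisely when $n^{-1}n' \in N_G(T) \cap K = N_K(T)$; hence $M^T$ is in bijection with $N_G(T)/N_K(T)$. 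As $T \subseteq N_K(T) \subseteq N_G(T)$ and $N_G(T)/T$ is finite, this set is finite of cardinality $[N_G(T):T]/[N_K(T):T] = |\W(G)|/|\W(K)|$, and therefore $\chi(M) = |M^T| = |\W(G)|/|\W(K)|$.

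For part (2), if $\rk{G} \neq \rk{K}$, then $\rk{K} < \rk{T}$; were some $gK$ fixed by $T$, the torus $g^{-1}Tg$ would sit inside $K$ with dimension $\rk{G} > \rk{K}$, which is impossible. Hence $M^T = \emptyset$ and $\chi(M) = \chi(M^T) = 0$.

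The one nontrivial ingredient is the identity $\chi(X) = \chi(X^T)$. I would prove it by picking a topological generator $t \in T$ — so that $X^t = X^T$ and $t$ fixes no nonzero normal vector along $X^T$ — and applying the Lefschetz fixed point theorem (whose local contributions along $X^T$ are all $+1$, by an orthogonality computation) together with the fact that $t$ is homotopic to $\mathrm{id}_X$; alternatively one just cites the classical statement. The remaining points — conjugacy of maximal tori, and that the $M^T$ above is a finite set, so that its Euler characteristic equals its cardinality — are routine, as are the degenerate cases $G = K$ and $\rk{K} = 0$. So the main obstacle is not conceptual: it lies only in making the localization statement precise, after which the proof is bookkeeping with normalizers of maximal tori.
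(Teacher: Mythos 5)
Your proposal is correct and follows essentially the same route as the paper: identify $M^T$ with $N_G(T)/N_K(T)\cong \W(G)/\W(K)$ (this is exactly the Guillemin--Holm--Zara proposition the paper invokes) and conclude via the localization identity $\chi(M)=\chi(M^T)$, which the paper cites from Kobayashi. The only cosmetic difference is in part (2), where the paper notes that $M$ carries a nowhere-zero vector field while you observe that $M^T=\emptyset$ and apply the same localization identity; both are immediate.
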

The result can be shown in this way: in case of unequal ranks, $M$ admits a vector field without zeros. 
In case of equal ranks, the fixed point set of a maximal torus $T\subset K$ is given by the following

\begin{proposition}[Guillemin--Holm--Zara {\cite[Proposition 2.2]{guillemin_holm_zara}}]
Let $M = G/K$ be a homogeneous space with $\rk{G}=\rk{K}$. Let $T$ be a maximal torus in $K$. Then 
$$M^T = N_G(T)/N_K(T) =\W(G)/\W(K).$$
In particular, $\W(G) = N_G(T)/T$ acts transitively on $M^T$.
\end{proposition}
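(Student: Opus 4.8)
The plan is to describe $M^T$ explicitly. A coset $gK\in G/K$ is fixed by $T$ if and only if $TgK=gK$, equivalently $g^{-1}Tg\subseteq K$; in that case $g^{-1}Tg$ is a torus in $K$ of dimension $\rk{K}$, hence a maximal torus of $K$. First I would invoke the conjugacy theorem for maximal tori of the compact group $K$ to find $k\in K$ with $k(g^{-1}Tg)k^{-1}=T$, so that $gk^{-1}\in N_G(T)$ and $gK=(gk^{-1})K$ has a representative in $N_G(T)$. Conversely every $n\in N_G(T)$ satisfies $n^{-1}Tn=T\subseteq K$, hence $nK\in M^T$. Thus $M^T=N_G(T)\cdot K/K$.

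Next I would turn this orbit into a coset space. The surjection $N_G(T)\to M^T$, $n\mapsto nK$, identifies $nK$ with $n'K$ exactly when $n^{-1}n'\in N_G(T)\cap K$ (note $n^{-1}n'\in N_G(T)$ automatically); and since any element of $K$ normalising $T$ lies in $N_K(T)$, we have $N_G(T)\cap K=N_K(T)$. Hence $M^T\cong N_G(T)/N_K(T)$ as a set carrying the left $N_G(T)$-action by translation. To pass to Weyl groups, I would use the equal-rank hypothesis: $T$ is a maximal torus of $K$, hence also of $G$, so $\W(G)=N_G(T)/T$ and $\W(K)=N_K(T)/T$. Since $T$ is normal in $N_G(T)$ and contained in $N_K(T)$, dividing numerator and denominator by $T$ yields a bijection $N_G(T)/N_K(T)\cong \W(G)/\W(K)$ compatible with the $N_G(T)/T=\W(G)$-actions. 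It then remains to note that $T$ acts trivially on $M^T$: for $t\in T$ and $n\in N_G(T)$ one has $tnK=n(n^{-1}tn)K=nK$ because $n^{-1}tn\in T\subseteq K$. Therefore the $N_G(T)$-action descends to $\W(G)$, and it is transitive because $N_G(T)$ already acts transitively on $N_G(T)\cdot K/K$.

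I do not expect a genuine obstacle here; the only subtleties are the bookkeeping about which ambient group a given torus is maximal in, and reading $\W(G)/\W(K)$ as a coset space rather than a group quotient (the subgroup $\W(K)$ need not be normal in $\W(G)$), equipped with the transitive $\W(G)$-action described above.
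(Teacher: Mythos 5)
Your argument is correct, and it is the standard proof: the paper itself offers no proof of this proposition, simply citing Guillemin--Holm--Zara, and your chain of reasoning (fixed cosets $gK$ correspond to $g^{-1}Tg\subseteq K$, conjugacy of maximal tori in $K$ to move the representative into $N_G(T)$, the identification $N_G(T)\cap K=N_K(T)$, and division by the common normal subgroup $T$) is exactly the argument in the cited reference. The one hypothesis you use correctly but should keep visible is the equal-rank assumption, which guarantees both that $g^{-1}Tg$ is a \emph{maximal} torus of $K$ and that $T$ is a maximal torus of $G$, so that $N_G(T)/T$ really is $\W(G)$.
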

Then Proposition \ref{euler_weyl} follows by invoking the classical result of Kobayashi \cite{kobayashi} that 
for any $T$-action on a manifold $M$ we have the equality of Euler characteristics 
$$\chi(M) = \chi(M^T).$$ 

Let $\Delta_K\subset \Delta_G$ denote the sets of roots of $K$ and $G$ with respect to the chosen maximal torus 
$T\subset K$, and put $\Delta_{G,K} \coloneqq \Delta_G \setminus \Delta_K$. 
Any root $\alpha$ is a linear form on ${\mathfrak{t}}$, and (fixing a bi-invariant inner product on the Lie algebra ${\mathfrak{g}}$ of $G$) we denote the corresponding reflection with respect to the hyperplane orthogonal to $\alpha$
by $\sigma_\alpha$. Recall also that the Weyl group $\W(G)$ acts on ${\mathfrak{t}}$. The induced action on the dual Lie algebra ${\mathfrak{t}}^*$ is given by 
$$w\cdot \alpha \coloneqq \alpha\circ {\mathrm{Ad}}_w^{-1}.$$ 
Then we have

\begin{theorem}[Guillemin--Holm--Zara {\cite[Theorem 2.4]{guillemin_holm_zara}}]
\label{thm:ghzhomspace}
The $T$-action on $M=G/K$ is of GKM type when $\rk{G} = \rk{K}$. Its GKM graph is given by the following properties:
\begin{enumerate}
\item The set of vertices is $\W(G)/\W(K)$. Vertices are denoted by $[w]$, where $w\in \W(G)$.
\item The edges containing a vertex $[w]$ are in one-to-one correspondence with $\Delta_{G,K}/\pm 1$: for $\alpha\in \Delta_{G,K}/\pm 1$, there is an edge between $[w]$ and $[w\sigma_\alpha]$ labelled  $w\cdot \alpha$.
\end{enumerate}
\end{theorem}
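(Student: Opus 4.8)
The plan is to verify the three conditions of Definition \ref{defn:gkm} and then read off the graph from the isotropy data, using throughout that $\rk{G}=\rk{K}$ forces $T$ to be a maximal torus of $G$ as well. Condition (2) is exactly the content of the preceding proposition of Guillemin--Holm--Zara: $M^T = \W(G)/\W(K)$, which is finite and non-empty. For condition (1), Proposition \ref{euler_weyl} gives $\chi(M) = |\W(G)|/|\W(K)| > 0$, and combined with the classical fact that an equal-rank homogeneous space has rational cohomology concentrated in even degrees (see e.g.\ \cite{wang}), this yields $H^{\mathrm{odd}}(M) = 0$. For condition (3) I will use the reformulation (3'): at the fixed point $eK$ one has $T_{eK}M = \mathfrak g/\mathfrak k$, whose complexification is $\bigoplus_{\alpha \in \Delta_{G,K}} \mathfrak g_\alpha$ since $T$ is maximal in both $K$ and $G$; hence the isotropy weights at $eK$, as elements of $\mathfrak t^*/\pm 1$, are exactly $\Delta_{G,K}/\pm 1$. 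In a reduced root system two roots are linearly dependent precisely when they are equal up to sign, so these weights are pairwise linearly independent. Since $N_G(T)$ acts on $M$ and, by the proposition just cited, $\W(G)$ permutes $M^T$ transitively, the isotropy weights at an arbitrary fixed point $[w]$ are obtained by applying $w$, namely $\{w\cdot\alpha : \alpha \in \Delta_{G,K}\}/\pm 1$; as $w$ acts linearly and invertibly on $\mathfrak t^*$, these are again pairwise independent. This proves that the $T$-action is GKM, identifies the vertex set with $\W(G)/\W(K)$, and shows that the edges at $[w]$ are canonically indexed by $\Delta_{G,K}/\pm1$, the edge corresponding to $\alpha$ carrying the label $w\cdot\alpha$.

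It remains to identify, for each $\alpha \in \Delta_{G,K}/\pm 1$, the second endpoint of the edge at $[w]$ labelled $w\cdot\alpha$, and I will first do this at the base point $[1] = eK$. Let $T_\alpha := (\ker\alpha)_0 \subset T$ be the codimension-one subtorus on which $\alpha$ vanishes, and let $L_\alpha := Z_G(T_\alpha)$ be its centraliser. Then $L_\alpha$ contains $T$, has the same rank as $G$, and its root system consists of exactly the roots of $G$ proportional to $\alpha$, namely $\{\pm\alpha\}$; hence $L_\alpha$ is, up to finite covering, the product of a torus with a rank-one group locally isomorphic to $SU(2)$. Because $\alpha \notin \Delta_K$, the intersection $L_\alpha \cap K = Z_K(T_\alpha)$ has Lie algebra $\mathfrak t$, so it is a finite extension of a maximal torus of $L_\alpha$. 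Consequently the orbit $N_\alpha := L_\alpha\cdot eK \cong L_\alpha/(L_\alpha\cap K)$ is, up to finite covers, $SU(2)/S^1 \cong S^2$: a $T$-invariant two-sphere through $eK$ on which $T$ acts through $T/T_\alpha \cong S^1$ with weight $\alpha$, so that $N_\alpha/T$ is the edge at $[1]$ with label $\alpha$. Its two $T$-fixed points are the images of $N_{L_\alpha}(T)$; since $L_\alpha\cap K$ meets $N_{L_\alpha}(T)$ in the maximal torus, these are $eK$ and $n_\alpha\cdot eK$, where $n_\alpha \in N_{L_\alpha}(T)$ represents the non-trivial element $\sigma_\alpha$ of $\W(L_\alpha)$. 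Under the identification $M^T = \W(G)/\W(K)$ this second point is $[\sigma_\alpha]$, as $\sigma_\alpha$ is also the reflection attached to $\alpha$ inside $\W(G)$.

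For a general vertex, pick $n \in N_G(T)$ representing $w$ and apply the diffeomorphism of $M$ induced by $n$: it maps $eK \mapsto [w]$, carries the subtorus $T_\alpha$ to $nT_\alpha n^{-1} = (\ker(w\cdot\alpha))_0$, and hence carries the two-sphere $N_\alpha$ to $n\cdot N_\alpha = (nL_\alpha n^{-1})\cdot nK = N_{w\cdot\alpha}$, a $T$-invariant two-sphere with label $w\cdot\alpha$ and fixed points $[w]$ and $[w\sigma_\alpha]$. Ranging over all $w$ and all $\alpha$ exhausts the edges at every vertex, so $M_1$ is precisely the union of these two-spheres and the graph is as described. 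One should also check that the labelling is independent of the chosen representative: replacing $w$ by $wv$ with $v\in\W(K)$ replaces $\alpha$ by $v^{-1}\cdot\alpha \in \Delta_{G,K}/\pm1$ (note $\W(K)$ preserves $\Delta_{G,K}$), and the identity $\sigma_{v^{-1}\cdot\alpha} = v^{-1}\sigma_\alpha v$ shows both the weight $w\cdot\alpha$ and the endpoint $[w\sigma_\alpha]$ are unchanged.

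The main obstacle is the second step: recognising the $T$-invariant two-spheres intrinsically and pinning down both their endpoints and their labels. This rests on the structure theory of the centraliser $Z_G(T_\alpha)$ as an almost-direct product of a torus and a rank-one group, and on the elementary identification of the two $T$-fixed points of $SU(2)/S^1$ with its Weyl group $\mathbb Z/2$; keeping track of how the reflection $\sigma_\alpha$ of this rank-one subgroup sits inside $\W(G)$ is exactly what makes the endpoint come out as $[w\sigma_\alpha]$. The cohomological input needed for condition (1) is also not formal, but may be quoted from the classical literature.
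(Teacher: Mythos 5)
Your argument is correct, but note that the paper itself gives no proof of this statement — it is quoted verbatim from Guillemin--Holm--Zara, and your reconstruction (fixed points via $N_G(T)/N_K(T)$, isotropy weights $\Delta_{G,K}/\pm 1$ from the root-space decomposition of $\mathfrak g/\mathfrak k$, and the invariant two-spheres realised as orbits of the centralisers $Z_G\bigl((\ker\alpha)_0\bigr)$, which are rank-one up to a central torus) is essentially the standard proof given in that reference. The only point worth flagging is that your identification of the second endpoint with $[\sigma_\alpha]$ tacitly uses that $\sigma_\alpha\notin\W(K)$ when $\alpha\in\Delta_{G,K}$ (so that the orbit of $L_\alpha$ really is a two-sphere with two distinct fixed points rather than a quotient thereof); this holds when $K$ is connected, which is the situation relevant to this paper.
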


\section{Torus actions on cohomogeneity one manifolds}
\label{core}

From now on, $M$ will always be a compact, connected, orientable cohomogeneity one $G$-manifold with orbit space an interval and group diagram $(G,K^+,K^-,H)$. 
Let $T \subset G$ be a maximal torus. The goal of this section, to be achieved in Theorem \ref{thm:gkmconditioncohom1}, 
is to find necessary and sufficient conditions for the $T$-action on $M$ to be of GKM type in terms of the group diagram of $M$. 
Notice that the GKM condition is independent of the chosen maximal torus of $G$. 

\begin{proposition}
Assume the $T$-action on $M$ is of GKM type. Then the rank of $H$ is smaller than the rank of $G$.
\end{proposition}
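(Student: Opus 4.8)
The plan is to argue by contradiction: suppose $\rk H = \rk G$, so that $H$ contains a maximal torus $T$ of $G$, and derive a contradiction with the assumption that the $T$-action on $M$ is GKM. First I would recall that, up to conjugation, we may arrange $T \subset H \subset K^\pm \subset G$. Since $T \subset H$ and $H$ acts trivially on the normal geodesic $\gamma$, the entire image of $\gamma$ consists of $T$-fixed points; in fact every point of $M$ lies in the $G$-orbit of a point of $\gamma$, but more to the point, each of the (infinitely many) points $\gamma(t)$ with $0 < t < 1$ has stabiliser exactly $H \supseteq T$, hence is fixed by $T$.

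The key step is then to observe that this forces $M^T$ to be infinite, directly contradicting condition (2) of Definition \ref{defn:gkm}. Concretely, $\{\gamma(t) : 0 < t < 1\}$ is a one-parameter family of distinct points, all fixed by $T$ because $T \subset H = G_{\gamma(t)}$. Since these points are pairwise distinct (the geodesic $\gamma$ meets every orbit exactly once, and the orbits $G\cdot\gamma(t)$ are distinct for distinct $t$), $M^T$ contains a continuum, so it is certainly not finite. This is incompatible with the $T$-action being of GKM type.

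I would present this as the whole argument; there is essentially no obstacle, the only thing to be careful about is the reduction "up to conjugation $T \subset H$": one chooses $T$ to be a maximal torus of $G$ contained in $H$, which exists precisely because $\rk H = \rk G$ means a maximal torus of $H$ is already maximal in $G$. Since the GKM condition is independent of the choice of maximal torus of $G$ (as noted at the start of this section), it suffices to derive the contradiction for this particular $T$. One could alternatively phrase the conclusion via condition (3'): the slice representation of $H$ at $\gamma(t)$ acts trivially in the directions along $\gamma$, but the cleanest route is simply the infinitude of $M^T$.
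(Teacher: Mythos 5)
Your argument is correct and is essentially the paper's proof: both derive a contradiction with condition (2) of Definition \ref{defn:gkm} by observing that $\rk H = \rk G$ would force each of the infinitely many principal orbits to contain a $T$-fixed point. Your explicit use of the normal geodesic and the remark on conjugation-independence of the GKM condition just spell out details the paper leaves implicit.
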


\begin{proof}
If the ranks of $G$ and $H$ were the same then every principal orbit would contain a fixed point,
contradicting the second condition in Definition~\ref{defn:gkm}.
\end{proof}

\begin{remark}
\label{orbit_type}
If $M/G$ was a circle, then all $G$-orbits would be principal. Hence any maximal torus of $G$ 
would have either zero or infinitely many fixed points, and $M$ would not be GKM.
\end{remark}

Recall from \cite[Proposition 5.1]{oliver_mare_1} that for a cohomogeneity one 
manifold positivity of the Euler characteristic is equivalent to the vanishing of the odd cohomology (alternatively, one may apply a result of Grove--Halperin \cite{grove_halperin}, see \cite[Remark 5.4]{oliver_mare_1}). 
By \cite[Proposition 1.2.1]{ale_pode} and Proposition \ref{euler_weyl}, when the rank of $H$ is smaller than that of $G$, the Euler characteristic of $M$ 
computes as 
\begin{align*}
\chi(M) & = \chi(G/K^-)+\chi(G/K^+)-\chi(G/H) \\
& = \chi(G/K^-)+\chi(G/K^+),
\end{align*}
so $\chi(M)>0$ if and only if 
one of $K^\pm$ has the same rank as $G$. 

Since $K^{\pm}/H$ are spheres, the differences of the ranks of $K^{\pm}$ and $H$ are at most one \cite[Satz IV]{samelson}. 
Then up to switching the roles of $K^+$ and $K^-$ there are two cases:
\begin{enumerate}
\item $\rk{G} = \rk{K^+} = \rk{K^-} = \rk{H} + 1$,
\item $\rk{G} = \rk{K^+} = \rk{K^-} + 1 = \rk{H}+1$.
\end{enumerate}

In both cases, Conditions (1) and (2) in Definition \ref{defn:gkm} are satisfied. To check when the action is GKM we will investigate Condition (3'). 
Roughly, the strategy to understand it is as follows: 
as each $T$-fixed point $p\in M^T$ is contained in one of the non-principal orbits, 
there is an orthogonal decomposition of the isotropy representation into its tangential and normal part
$$T_pM = T_p(G\cdot p) \oplus \nu_p(G\cdot p),$$
so that the weights can be understood by investigating these two representations separately. 
The tangential representation was already treated in \cite{guillemin_holm_zara}, 
so we need to understand when the weights of cohomogeneity one representations are pairwise linearly independent to each other, 
and when they are pairwise linearly independent to those of the tangential representation.

In the following proposition the group $K$ will be the identity component of either one of those 
subgroups $K^{\pm}$ with $\rk{G} = \rk{K^\pm}$, and $V$ will play the role of the normal space $\nu_p(G\cdot p)$ above. 
Recall that an irreducible representation of a compact Lie group on a finite-dimensional complex vector space is called 
\begin{itemize}
\item {\emph{of real type}} if it is the complexification of a real representation,
\item {\emph{of quaternionic type}} if it is obtained from an ${\mathbb{H}}$-linear representation on a quaternionic vector space by restriction of scalars,
\item {\emph{of complex type}} if it is neither real nor quaternionic. 
\end{itemize}
\begin{proposition}
\label{prop:cohom1repweylgroup}
Let $\pi \colon K\to \mathrm{SO}(V)$ be an orthogonal representation of a compact connected Lie group $K$ on an even-dimensional Euclidean vector space $V$. Assume $K$ acts transitively on the unit sphere in $V$. 
\begin{enumerate}
\item If the representation does not admit an invariant complex structure, then the Weyl group $\W(K)$ 
acts transitively on the weights of $V$ and its complexification $V^\CC$. In this case, the following conditions hold true:
\begin{enumerate}
\item  the weights of the representation $V$ are pairwise linearly independent,
\item for every weight $\lambda$ of $V^\CC$ there exists $w\in \W(K)$ such that $w\lambda = -\lambda$,
\item no weight is a multiple of a root of $K$.
\end{enumerate} \label{1}
\item If the representation admits an invariant complex structure, then we fix one and consider the weights as linear forms on ${\mathfrak{t}}$. 
The Weyl group $\W(K)$ acts transitively on these weights. In this case, the following conditions are equivalent:
\begin{enumerate}
\item the weights are pairwise linearly independent, 
\item $w\lambda\neq -\lambda$ for all $w\in \W(K)$ and all weights $\lambda$, 
\item no weight is a multiple of a root of $K$,
\item the representation is not of quaternionic type,
\item up to non-effectivity, the representation is not isomorphic to the standard representation of ${\mathrm{Sp}}(n)$ on ${\mathbb{H}}^n$. 
\end{enumerate}\label{2}
\end{enumerate}
\end{proposition}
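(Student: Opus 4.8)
The plan is to first reduce the problem to the classification of transitive actions on spheres, which is small and explicit, and then to extract the weight-theoretic information from each case. The transitive isometric actions of compact connected $K$ on unit spheres are, up to effectivity, given by the classical list (Borel, Montgomery--Samelson, Hsiang--Hsiang): the standard representations of $\mathrm{SO}(n)$ on $\R^n$, $\mathrm{U}(n)$ and $\mathrm{SU}(n)$ on $\CC^n$, $\mathrm{Sp}(n)$ on $\HH^n$, $\mathrm{Sp}(n)\cdot \mathrm{Sp}(1)$ and $\mathrm{Sp}(n)\cdot \mathrm{U}(1)$ on $\HH^n$, together with $G_2$ on $\R^7$, $\mathrm{Spin}(7)$ on $\R^8$, and $\mathrm{Spin}(9)$ on $\R^{16}$. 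Since $V$ is even-dimensional by hypothesis, $\mathrm{SO}(n)$ with $n$ odd and $G_2$ are excluded; all remaining representations either admit an invariant complex structure (the $\mathrm{U}(n)$, $\mathrm{SU}(n)$, $\mathrm{Sp}(n)\cdot\mathrm{U}(1)$, $\mathrm{Sp}(n)$ cases, viewed $\CC$-linearly) or do not (the real forms $\mathrm{SO}(2n)$ on $\R^{2n}$, $\mathrm{Spin}(7)$ on $\R^8$, $\mathrm{Spin}(9)$ on $\R^{16}$, and $\mathrm{Sp}(n)\cdot\mathrm{Sp}(1)$ on $\HH^n$ with its real $4n$-dimensional structure). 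In each line of the list one reads off a maximal torus, computes the weights of $V$ (or of $V^\CC$), and checks transitivity of $\W(K)$ on them directly; this is where the bulk of the (routine, case-by-case) verification lies.

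Once transitivity of $\W(K)$ on the weights is established, conditions (a)--(c) in part (1) and the equivalences (a)--(e) in part (2) follow from general principles plus a uniform observation. For part (1): if the representation has no invariant complex structure then the weights of $V$ and of $V^\CC$ coincide as elements of $\mathfrak t^*/\pm 1$, and transitivity of $\W(K)$ together with the fact that a real (non-complex) irreducible representation has a weight $\lambda$ with $-\lambda$ also a weight — indeed $V^\CC\cong \overline{V^\CC}$ — gives (b). For (a), suppose two distinct weights $\lambda,\mu$ were proportional; by transitivity we may take $\mu = w\lambda$ for some $w\in\W(K)$, so $w\lambda = c\lambda$ with $c=\pm 1$ forced (Weyl group elements are orthogonal, hence isometries, so $|c|=1$), and $c=1$ contradicts $\lambda\ne\mu$ while $c=-1$ together with multiplicity/eigenvalue bookkeeping of the finite-order element $w$ forces a contradiction with even-dimensionality or with the explicit list — this has to be checked on the finitely many families. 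Statement (c): a weight that is a multiple of a root $\alpha$ would be fixed up to sign by $\sigma_\alpha\in\W(K)$, so after using transitivity every weight would be $\sigma_\alpha$-invariant up to sign, forcing all weights into the line $\R\alpha$ and hence $\dim_\R V\le 2$; but then $K$ acts transitively on $S^1$, which is abelian, and the only such representation is non-effective or reducible — ruled out. For part (2) the same arguments give (a)$\Leftrightarrow$(b)$\Leftrightarrow$(c), and the remaining equivalences with (d) and (e) come from inspecting exactly which representation on the list has a weight $\lambda$ with $-\lambda$ also a weight: this happens precisely for $\mathrm{Sp}(n)$ on $\HH^n$ (the quaternionic structure forces the weights to come in pairs $\pm e_i$), which after removing non-effective kernels is the content of (d) and (e).

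The main obstacle I anticipate is twofold. First, one must be careful about the notion of "admits an invariant complex structure": a representation like $\mathrm{Sp}(n)$ on $\HH^n$ does admit $K$-invariant complex structures (many, in fact), and the statement's dichotomy is really between representations that are forced to be real (no invariant $J$ at all) and those that have one; getting the logic of part (1) versus part (2) to partition the cases cleanly, and handling $\mathrm{Sp}(n)\cdot\mathrm{Sp}(1)$ (which has no invariant complex structure, landing in part (1)) versus $\mathrm{Sp}(n)\cdot\mathrm{U}(1)$ (which does, landing in part (2)) requires precision. Second, the exceptional cases $\mathrm{Spin}(7)$ on $\R^8$ and $\mathrm{Spin}(9)$ on $\R^{16}$ are the spin representations, whose weights are $\tfrac12(\pm e_1\pm\cdots\pm e_k)$; verifying linear independence of these weights and transitivity of the Weyl group on them is a genuine (if finite) computation in the respective root systems, and one should make sure no two of these half-sum weights are accidentally proportional — they are not, but it must be confirmed. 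A clean way to organise the writeup is therefore: (i) invoke the classification and discard the odd/quaternionic-$\mathrm{Sp}(1)$-ambiguous cases using even-dimensionality and effectivity; (ii) prove $\W(K)$-transitivity on weights uniformly where possible, case-by-case otherwise; (iii) deduce (a)--(c) and the equivalences from transitivity plus the orthogonality of Weyl elements; (iv) isolate $\mathrm{Sp}(n)$ on $\HH^n$ as the unique source of the failure in part (2).
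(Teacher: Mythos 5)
Your route is genuinely different from the paper's. You reduce everything to the Borel--Montgomery--Samelson classification of transitive sphere actions and propose to verify transitivity of $\W(K)$ on the weights, as well as (1)(a)--(c), case by case. The paper instead proves $\W(K)$-transitivity abstractly (conjugating isotropy algebras of weight vectors using transitivity on the unit sphere, then a highest-weight string argument showing all weights have maximal length and multiplicity one), and derives (1)(a)--(c) and most of the equivalences in (2) from this general structure; the classification table is invoked only once, at the very end, to pass from ``quaternionic type'' to ``standard representation of $\mathrm{Sp}(n)$ on $\HH^n$''. Your approach buys concreteness and avoids the somewhat delicate string and Cartan-matrix arguments; the paper's shows \emph{why} the weights behave as claimed and isolates the dependence on the classification in a single step. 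Your sorting of the cases into parts (1) and (2) (in particular placing $\mathrm{Sp}(n)\cdot\mathrm{Sp}(1)$ in part (1) and $\mathrm{Sp}(n)$, $\mathrm{Sp}(n)\cdot\mathrm{U}(1)$ in part (2)) is correct.

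Two of your ``general principles'' do not hold up, however, and must be replaced by the case check you defer (or by the paper's arguments). First, your argument for (1)(c) is wrong: from $\lambda=c\alpha$ and transitivity you may only conclude that every weight is a multiple of \emph{some} root $w\alpha$, not that every weight is $\sigma_\alpha$-invariant up to sign, and the weights certainly need not lie on the single line $\R\alpha$ (for $\mathrm{Sp}(2)$ on $\HH^2$ the weights $\pm e_1,\pm e_2$ are multiples of the roots $\pm 2e_1,\pm 2e_2$ and span a plane). The paper rules this situation out via an integrality computation with the Cartan matrix, showing that $\lambda=\alpha/2$ forces a simple factor of type $C_n$ acting by the standard $\HH^n$-representation, which is quaternionic and hence not of real type. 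Second, in (1)(a) your dichotomy $c=\pm1$ only excludes \emph{distinct} proportional weights; pairwise linear independence also requires each weight space of $V$ to be exactly two-dimensional, i.e.\ multiplicity one, which you never establish (the paper gets it because all weights are $\W(K)$-conjugate to the highest weight, which has multiplicity one). Neither issue is fatal to your strategy---both statements can be read off the explicit weight lists of the finitely many representations---but as written these two steps are gaps.
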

\begin{proof}
Our first goal is to show the transitivity statements on the Weyl group actions in \eqref{1} and \eqref{2}. 
A large part of this bit of the proof is contained in \cite[Lemma 29]{gorodski}. To this end, consider two weights $\lambda,\mu\in {\mathfrak{t}}^*/\pm 1$ of $V$. Let $v,w$ be unit length weight vectors for $\lambda$ and $\mu$ respectively. As the action of $K$ is transitive on the unit sphere in $V$, there is $a\in K$ such that $w=av$. The isotropy algebras $\mathfrak{k}_v$ and $\mathfrak{k}_w$ are hence conjugate via $a$, namely $\mathfrak{k}_w={\mathrm{Ad}}_a(\mathfrak{k}_v)$. Let $\mathfrak{t}_v=\ker \lambda$, $\mathfrak{t}_w=\ker \mu$ be the isotropy algebras of the restricted action of $T$ on $V$. Observe that by the theorem of maximal tori, ${\mathrm{Ad}}_a(\mathfrak{t}_v),\mathfrak{t}_w\subset \mathfrak{k}_w={\mathrm{Ad}}_a(\mathfrak{k}_v)$ are conjugate via an element $a'\in K_w$, namely $\mathfrak{t}_w={\mathrm{Ad}}_{a'a}(\mathfrak{t}_v)$. Thus, we have that $\ker \mu$ is contained in ${\mathrm{Ad}}_{a'a}(\mathfrak{t})$ and  $\mathfrak{t}$, which are both Lie algebras of maximal tori in $\mathfrak{k}$.  Notice that any two maximal tori $T_1$, $T_2$ of a compact Lie group are conjugate to each other via a transformation that fixes their intersection pointwise: in fact, the centralizer $C(T_1\cap T_2)$ of the intersection of $T_1$ and $T_2$ contains both tori, so $T_1$ and $T_2$ are conjugate via an element in $C(T_1\cap T_2)$.
Therefore, we take an element $a''\in K$ such that ${\mathrm{Ad}}_{a''}$ fixes $\ker \mu$ and satisfies ${\mathrm{Ad}}_{a''}({\mathrm{Ad}}_{a'a}({\mathfrak{t}})) = {\mathfrak{t}}$.  Then, $a''a'a$ defines an element of the Weyl group $\W(K)$ mapping $\ker \lambda$ to $\ker \mu$. 
In particular, all weights are conjugate to a multiple of $\lambda$ via the $\W(K)$-action. 
% As the representation is transitive on the unit sphere, it maps a weight vector of $\lambda$ to a weight vector of $\mu$. 
% In particular, $\ker \lambda$ and $\ker \mu$ are conjugate via $K$, i.e.\ there exists $k\in K$ such that ${\mathrm{Ad}}_k(\ker \lambda) = \ker \mu$. 
% As any two maximal tori of a compact Lie group are conjugate to each other via a transformation that fixes their intersection pointwise, 
% we find an element $k'\in K$ such that ${\mathrm{Ad}}_{k'}$ fixes $\ker \mu$ and satisfies ${\mathrm{Ad}}_{k'}({\mathrm{Ad}}_k({\mathfrak{t}})) = {\mathfrak{t}}$. 
% Then $k'k$ defines an element of the Weyl group $\W(K)$ mapping $\ker \lambda$ to $\ker \mu$. 
% In particular, all weights are conjugate to a multiple of $\lambda$ via the $\W(K)$-action. 

Now, we denote by $U$ the irreducible complex representation which is either $V$ with a fixed invariant complex structure, or $V^\CC$ in case $V$ does not admit any.
We will show that $\W(K)$ acts transitively on the weights of $U$ (which are linear forms on ${\mathfrak{t}}^*$). This will imply all the desired transitivity statements, as the weights of $V^\CC$ are exactly all linear forms $\lambda$ such that $\pm \lambda$ is a weight of $V$. 

Let $\lambda$ be a highest weight of the representation $U$ (see e.g.\ \cite[Ch.\ IX, \S 7.2]{bourbaki} for a reference that treats this theory for representations of compact groups). We choose a notion of positivity such that $\lambda$ is contained in the closure of the positive Weyl chamber (recall that the positive, or fundamental, Weyl chamber is defined by $C=\{\beta\in {\mathfrak{t}}^*\mid \langle \beta,\alpha\rangle >0 \textrm{ for all simple roots }\alpha\}$). If all (simple) roots are perpendicular to $\lambda$, then $\lambda$ is the only weight of $U$, and hence $U$ is effectively a circle representation on $\CC$, for which the transitivity statement is trivial. So we may assume that there is at least one simple root which is not perpendicular to $\lambda$. Let us consider first the case that one simple root is parallel to $\lambda$, and all others perpendicular. In this case, the parallel root belongs to an isolated node in the Dynkin diagram of $K$, i.e.\ up to covering $K$ splits off an $\mathrm{SU}(2)$ factor, and the restriction of the representation to this $\mathrm{SU}(2)$ is already transitive on the unit sphere in $U$. As $\mathrm{SU}(2)$ is three-dimensional, this implies that the complex dimension of $U$ is two. The only weights are hence $\pm \lambda$, and the Weyl group of $\mathrm{SU}(2)$ acts transitively on these weights.

In the remaining case there exists a simple root $\alpha$ of $K$ that is not parallel to $\lambda$ and satisfies $\langle \alpha,\lambda\rangle >0$. The weights are invariant under reflection along roots, and 
\[
\lambda,\, \lambda - \alpha,\, \ldots, \, \lambda- 2\frac{\langle \lambda,\alpha\rangle}{\lVert\alpha \rVert^2}\alpha = s_\alpha \lambda
\]
are weights of $U$---this follows from the classification of irreducible ${\mathrm{SU}}(2)$-representations. By definition of the positive Weyl chamber $C$ the hyperplane defined by $\alpha$ touches $C$. Since $\lambda$ is contained in the closure $\overline{C}$ of $C$, and $s_\alpha\lambda$ in $\overline{s_\alpha(C)}$, this string of weights is contained in the convex cone $\overline{C}\cup \overline{s_\alpha(C)}$. As the Weyl group acts simply transitively on the Weyl chambers, and we have shown already that it acts transitively on the weights up to multiple, this shows that the string contains at most three elements. However, it can only contain three elements, if they are of the form $\lambda,\, \lambda-\alpha,\, \lambda-2\alpha$, and lie on the boundaries of the Weyl chambers. But this would yield a contradiction as follows: denoting by $w$ a Weyl group element sending $s_\alpha\lambda=\lambda-2\alpha$ to a multiple of $\lambda-\alpha$, then $s_\alpha\circ w$ fixes $s_\alpha(C)$ but not pointwise. Thus, the string contains only two elements, i.e.\ $s_\alpha \lambda = \lambda-\alpha$. Now, if there existed a weight of the form $c\lambda$, with $0< c< 1$, then $s_\alpha (c\lambda) = c(\lambda - \alpha) = c\lambda-c\alpha$, which is a contradiction, as $c$ is not an integer. Then the weights are all of maximal length $|\lambda|$. Since $\lambda$ is the unique weight of maximal length in $\overline{C}$ and $\W$ acts transitively on the Weyl chambers, it acts transitively on the weights of maximal length and hence on all weights.

Let us consider case \eqref{1}. Since any two weights of a representation related to each other by a Weyl group element have the same multiplicity (a representative of the Weyl group element in the normaliser of the torus intertwines the corresponding weight spaces),
we have that the weight spaces of the complexified representation $V^\CC$ are all complex one-dimensional. It follows that the weight space of any weight $\pm \lambda$ is real two-dimensional, i.e.\ \eqref{1} (a) holds true. 

We observe that for any weight $\lambda$ of $V^\CC$ also $-\lambda$ is a weight. As the Weyl group $\W(K)$ acts transitively on the weights of $V^\CC$, statement \eqref{1} (b) is immediate.

To show \eqref{1} (c), i.e.\ that no weight is a multiple of a root of $K$, we assume the contrary. As we showed above that the Weyl group acts transitively on the weights, it follows that every weight is a multiple of a root (but not necessarily all roots multiples of weights). As all weights have the same length, for every weight $\lambda$ there is a root $\alpha$ such that $\lambda= \alpha/2$ ($\lambda$ is a positive multiple of a root $\alpha$, then $\lambda-\alpha$ is also a weight, which has to be $-\lambda$). We fix a weight $\lambda$, together with the root $\alpha = 2\lambda$, and choose a notion of positivity of roots such that $\alpha$ is simple. Let $K'\subset K$ be the simple factor of $K$ whose root system contains $\alpha$. We consider the restriction of the representation to $K'$. As $\lambda$ is an integral weight, we have $\langle \alpha,\beta\rangle/\langle \beta,\beta\rangle = 2\langle \lambda,\beta\rangle / \langle \beta,\beta\rangle \in {\mathbb{Z}}$ for all roots $\beta$. Varying $\beta$ over the other simple roots of $K'$, these numbers are exactly $1/2$ times the entries in the $\alpha$-column in the Cartan matrix of $K'$. The only simple Lie group $K'$ whose Cartan matrix has a column with only even entries is ${\mathrm{Sp}}(n)$, i.e.\ $C_n$, for $\alpha$ the long simple root. In this case, $\lambda = \alpha/2$ is an integral weight and the corresponding representation is the standard representation of ${\mathrm{Sp(n)}}$ on ${\mathbb{H}}^n$, which is quaternionic, hence not the complexification of a real representation, which is a contradiction. Hence it is impossible that a weight is a multiple of a root, i.e.\ \eqref{1} (c) holds true.

Consider now the case that the representation admits a complex structure. We show the equivalences in \eqref{2}. If the weights are pairwise linearly independent, then for sure $w\lambda\neq -\lambda$ for all $w$ and all $\lambda$. The converse direction is true because the Weyl group acts transitively on the weights and the weight spaces are one-dimensional. If a weight $\lambda$ was a multiple of a root, then reflection along the root would send $\lambda$ to $-\lambda$. 

Finally, if $w\lambda=-\lambda$ for some weight $\lambda$ and $w\in W(K)$ (and hence for all $\lambda$ there is such $w$, by the transitivity of the Weyl group action), then the representation is quaternionic \cite[Ch.\ IX, \S 7.2, Proposition 1]{bourbaki} (it cannot be of complex type, because these representations do not admit symmetric weights with respect to $0$, and it cannot be of real type, because the complexification of an orthogonal representation has at least cohomogeneity two). 
Then, considering the list of effective cohomogeneity one representations \cite{borel2, borel1, montgomery}, cf.\ \cite{bogdan},
\begin{center}
\begin{table}[htb]
\begin{tabular}{cccccc}
 $K$ & $H$ & $K/H$& \multicolumn{2}{c}{representation}\\[0.05cm]
\hline \\[-0.25cm]
${\mathrm{SO}}(2n+1)$ & ${\mathrm{SO}}(2n)$ & $S^{2n}$ & ${\mathbb{R}}^{2n+1}$ & odd-dimensional\\[0.05cm]
${\mathrm{SO}}(2n)$ & ${\mathrm{SO}}(2n-1)$ & $S^{2n-1}$ & ${\mathbb{R}}^{2n}$ & real\\[0.05cm]
${\mathrm{SU}}(n)$ & ${\mathrm{SU}}(n-1)$ & $S^{2n-1}$ & ${\mathbb{C}}^n$ & complex type\\[0.05cm]
${\mathrm{U}}(n)$ & ${\mathrm{U}}(n-1)$ & $S^{2n-1}$ & ${\mathbb{C}}^n$ & complex type\\[0.05cm]
${\mathrm{Sp}}(n)$ & ${\mathrm{Sp}}(n-1)$ & $S^{4n-1}$ & ${\mathbb{H}}^n$ & quaternionic type\\[0.05cm]
${\mathrm{Sp}}(n)\cdot {\mathrm{Sp}}(1)$ & ${\mathrm{Sp}}(n-1)\cdot {\mathrm{Sp}}(1)$ & $S^{4n-1}$ & ${\mathbb{H}}^n$ & real\\[0.05cm]
${\mathrm{Sp}}(n)\cdot {\mathrm{U}}(1)$ & ${\mathrm{Sp}}(n-1)\cdot {\mathrm{U}}(1)$ & $S^{4n-1}$ & ${\mathbb{H}}^n$ & complex type\\[0.05cm]
${\mathrm{G}}_2$ & ${\mathrm{SU}}(3)$ & $S^6$ & ${\mathrm{Im}}\, {\mathbb{O}}$ & odd-dimensional\\[0.05cm]
${\mathrm{Spin}}(7)$ & ${\mathrm{G}}_2$ & $S^7$ & $\Delta_7 \cong {\mathbb{R}}^8$ & real\\[0.05cm]
${\mathrm{Spin}}(9)$ & ${\mathrm{Spin}}(7)$ & $S^{15}$ & \ $\Delta_9\cong {\mathbb{R}}^{16}$ & real\\[0.2cm]
\end{tabular}
\caption{Transitive actions on spheres / cohomogeneity one representations of compact connected Lie groups. Here with the notation of type ${\mathrm{Sp}}(n)\cdot {\mathrm{Sp}}(1)$ one means the quotient group $({\mathrm{Sp}}(n)\times {\mathrm{Sp}}(1))/\mathbb{Z}_2$, where $\mathbb{Z}_2$ is generated by $(-{\mathrm{id}}, -{\mathrm{id}})$.}
\end{table}
\end{center}
it has to be (up to the non-effectivity kernel) the standard representation of $\mathrm{Sp}(n)$ on $\mathbb{H}^n$ 
(to see that the spin representations of $\mathrm{Spin}(7)$ and $\mathrm{Spin}(9)$ are real see for instance \cite[Section 1.7]{friedrich}), 
in which case the weights are multiple of roots (see also Remark \ref{ex:weightsquat} below).
\end{proof}

\begin{remark}
\label{ex:weightsquat} 
A quaternionic representation may be viewed as a complex representation $V$ together with an equivariant endomorphism $j$ such that $j^2 = -\mathrm{id}$ and $j(iv) = -ij(v)$, $v \in V$.
If $V_{\alpha}$ is a weight space with weight $\alpha$, then $j(V_{\alpha})$ has weight $-\alpha$. 
Hence the weights of a quaternionic representation are never pairwise linearly independent.
For example, this is the case for the standard $\mathrm{Sp}(n)$-representation 
on $\HH^n$. For $n=2$, its weights are depicted in Figure~\ref{fig:sp2h2}.
As predicted by Proposition \ref{prop:cohom1repweylgroup}, the weights are multiples 
of roots (and every weight is sent to its negative by some Weyl group element). 
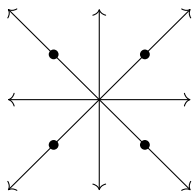
\begin{figure}[htb]
\begin{center}
\begin{tikzpicture}[scale=1.2]
\draw[->] (0,0) -- (1,-1);
\draw[->] (0,0) -- (0,-1);
\draw[->] (0,0) -- (1,1);
\draw[->] (0,0) -- (-1,-1);
\draw[->] (0,0) -- (-1,1);
\draw[->] (0,0) -- (0,1);
\draw[->] (0,0) -- (1,0);
\draw[->] (0,0) -- (-1,0);

\node at (1/2,1/2)[circle,fill,inner sep=1.3pt]{};
\node at (-1/2,1/2)[circle,fill,inner sep=1.3pt]{};
\node at (1/2,-1/2)[circle,fill,inner sep=1.3pt]{};
\node at (-1/2,-1/2)[circle,fill,inner sep=1.3pt]{};
\end{tikzpicture}

\end{center}

\caption{Roots of ${\mathrm{Sp}}(2)$ and weights of its standard representation on $\HH^2$}
\label{fig:sp2h2}
\end{figure}

Notice that there are two other representations on $\mathbb H^n$ in the above list, of $\mathrm{Sp}(n)\cdot \mathrm{Sp}(1)$ and $\mathrm{Sp}(n) \cdot \mathrm{U}(1)$, that are not quaternionic. 
If one of these representations was quaternionic, then the weights with respect to the restricted complex structure 
would come in pairs of the form $\alpha$ and $-\alpha$, hence $\pm \alpha$ would have multiplicity greater
than one as weights of the real representation. However, 
these representations have $2n$ pairwise linearly independent weights: denoting the dual basis of the standard basis of the diagonal maximal torus $T^n\subset {\mathrm{Sp}}(n)$ by $e_i$ and of the diagonal maximal torus $T^1= {\mathrm{U}}(1)\subset {\mathrm{Sp}}(1)$ by $f$, the weights are given by $e_1-f,\ldots,e_n-f,e_1+f,\ldots,e_n+f$.
\end{remark}

We can now characterise cohomogeneity one GKM manifolds in terms of properties of their group diagram.
\begin{theorem}
\label{thm:gkmconditioncohom1} 
Consider an orientable cohomogeneity one $G$-manifold, 
represented by the group diagram $(G,K^+,K^-,H)$. 
Let $T\subset G$ be a maximal torus containing a maximal torus of~$H$, 
and $\Delta_G\subset {\mathfrak{t}}^*$ be the root system of $G$ with respect to $T$. 
Then the $T$-action is of GKM type if and only if 
the following two conditions hold:
\begin{enumerate}
\item[(i)] at least one of the singular isotropy groups $K^{\pm}$ has the same rank as $G$, and $\rk H = \rk G - 1$,
\item[(ii)] no element of $\Delta_G$ vanishes on ${\mathfrak{t}}\cap {\mathfrak{h}}$. 
\end{enumerate}
Furthermore, if the action is of GKM type, then there is a unique maximal torus $T\subset G$ 
containing a given maximal torus of $H$. 
Such $T$ is contained in any of the two subgroups $K^\pm$ that are of maximal rank. 
\end{theorem}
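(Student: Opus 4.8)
The plan is to prove the three assertions of the final statement in order: (a) conditions (i) and (ii) are necessary and sufficient for the $T$-action to be of GKM type; (b) if the action is GKM, the maximal torus $T$ containing a given maximal torus $S$ of $H$ is unique; (c) such a $T$ lies inside any $K^\pm$ that has maximal rank.

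\medskip

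\noindent\textbf{Necessity and sufficiency of (i) and (ii).} First I would recall that any maximal torus $T\subset G$ can be chosen to contain a maximal torus $S$ of $H$, since $H\subset G$. As observed in the paragraphs preceding the theorem, Conditions (1) and (2) of Definition~\ref{defn:gkm} hold exactly when $\chi(M)>0$, which by the Euler characteristic computation $\chi(M)=\chi(G/K^+)+\chi(G/K^-)$ (valid when $\rk H<\rk G$) together with Proposition~\ref{euler_weyl} is equivalent to saying at least one $K^\pm$ has maximal rank; and if no $K^\pm$ has maximal rank one checks $\chi(M)=0$, so (1)--(2) fail. This gives the first half of (i); the second half, $\rk H=\rk G-1$, then follows from Samelson's bound on rank differences in $K^\pm/H$. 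So it remains to analyse Condition (3') at each $T$-fixed point $p$, which lies on a non-principal orbit $G\cdot p=G/K^\pm$. Using the splitting $T_pM=T_p(G\cdot p)\oplus\nu_p(G\cdot p)$, I would invoke Theorem~\ref{thm:ghzhomspace} for the weights of the tangential part (which are $w\cdot\alpha$ for $\alpha\in\Delta_{G,K^\pm}$) and Proposition~\ref{prop:cohom1repweylgroup} for the slice representation $K^\pm\to\mathrm{SO}(\nu_p)$, which acts transitively on the unit sphere. The content of Condition~(ii) is precisely that no root of $G$ vanishes on $\mathfrak t\cap\mathfrak h$; I would show this is equivalent to: (a) the roots in $\Delta_{G,K^\pm}$ are pairwise independent and independent of the slice weights, and (b) the slice representation has no quaternionic factor and no root-multiple weight. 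The key computation is that the slice weights are the weights vanishing on $\mathfrak t\cap\mathfrak h$ together with the fact (Proposition~\ref{prop:cohom1repweylgroup}(1c),(2c)) that non-quaternionicity of the slice is what prevents a slice weight from being a root-multiple; one then has to check that the remaining independence requirements among roots and between roots and slice weights are subsumed by (ii). This matching of the abstract GKM condition (3') with the clean root-theoretic statement (ii) is where the argument requires care, and I expect it to be the main obstacle, particularly handling the two rank cases (1) and (2) uniformly and ruling out accidental linear dependences between a root of $G$ and a slice weight that are not visible from $\mathfrak t\cap\mathfrak h$ alone.

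\medskip

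\noindent\textbf{Uniqueness of $T$.} Assuming the action is GKM, I would argue as follows. Let $S$ be a maximal torus of $H$ with Lie algebra $\mathfrak s=\mathfrak t\cap\mathfrak h$, and suppose $T_1,T_2$ are maximal tori of $G$ with $S\subset T_i$. By Condition~(ii) applied to $T_1$, no root of $G$ with respect to $T_1$ vanishes on $\mathfrak s$; equivalently $\mathfrak s$ is not contained in any root hyperplane, so the centraliser $Z_G(S)$ has the same rank as $G$ and in fact $Z_G(\mathfrak s)$ has no roots, i.e.\ $Z_G(S)=T_1$. The same argument with $T_2$ gives $Z_G(S)=T_2$, hence $T_1=T_2$. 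The only subtlety is to be sure we may take $\mathfrak s=\mathfrak t\cap\mathfrak h$ as the full isotropy subalgebra relevant to (ii); this is legitimate because (ii) as stated refers to $\mathfrak t\cap\mathfrak h$ and the maximal torus of $H$ inside $T$ has Lie algebra exactly $\mathfrak t\cap\mathfrak h$.

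\medskip

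\noindent\textbf{$T$ is contained in the maximal-rank $K^\pm$.} Suppose $K^+$ has maximal rank and fix the unique $T$ from the previous step containing the given maximal torus $S$ of $H$. Since $\rk K^+=\rk G$, $K^+$ contains a maximal torus $T'$ of $G$, and since $H\subset K^+$ with $S$ a maximal torus of $H$, after conjugating inside $K^+$ we may assume $S\subset T'$. But $T'$ is then a maximal torus of $G$ containing $S$, so by uniqueness $T'=T$, whence $T\subset K^+$. The same applies to $K^-$ if it too has maximal rank. This step is short once uniqueness is in hand; the only thing to watch is that the conjugation moving $S$ into $T'$ takes place inside $K^+$ (possible because all maximal tori of the compact group $K^+$ are $K^+$-conjugate and $S$ is contained in one of them), so that the resulting $T'$ still contains $S$ and the uniqueness statement applies verbatim.
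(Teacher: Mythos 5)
Your overall architecture matches the paper's: (i) is handled via the Euler characteristic computation and Proposition~\ref{euler_weyl}, condition (3') is analysed through the splitting $T_pM=T_p(G\cdot p)\oplus\nu_p(G\cdot p)$ with Theorem~\ref{thm:ghzhomspace} for the tangential part and Proposition~\ref{prop:cohom1repweylgroup} for the slice, and uniqueness of $T$ follows from $\mathfrak t\cap\mathfrak h$ containing regular elements (your centraliser argument and your deduction that $T\subset K^\pm$ are both correct and essentially what the paper does). However, the core of the theorem --- the equivalence of (3') with (ii) --- is exactly the step you defer as ``the main obstacle,'' and as written there is a genuine gap there, compounded by an incorrect assertion: the slice weights are \emph{not} ``the weights vanishing on $\mathfrak t\cap\mathfrak h$.'' Only one slice weight (up to sign) has kernel $\mathfrak t\cap\mathfrak h$, namely the differential $\lambda$ of the character $T\to T/(T\cap H)$ coming from the fact that $T\cap H$ occurs as a codimension-one isotropy of the slice representation; the remaining slice weights are its $\W(K^+)$-conjugates and do not vanish on $\mathfrak t\cap\mathfrak h$.

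The missing ideas that close the gap are the following. First, since $\ker\lambda=\mathfrak t\cap\mathfrak h$ is a hyperplane in $\mathfrak t$, a linear form is linearly dependent on $\lambda$ if and only if it vanishes on $\mathfrak t\cap\mathfrak h$; this is what converts independence statements into condition (ii). Second, $\W(K^+_0)$ acts transitively on the slice weights (Proposition~\ref{prop:cohom1repweylgroup}) and preserves $\Delta_G\setminus\Delta_{K^+}$, so checking independence of every tangential weight against every slice weight reduces to checking against the single weight $\lambda$, i.e.\ to no element of $\Delta_G\setminus\Delta_{K^+}$ vanishing on $\mathfrak t\cap\mathfrak h$. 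Third, pairwise independence of the slice weights among themselves translates into the $\Delta_{K^+}$-part of (ii): in the complex case via the equivalence (a)$\Leftrightarrow$(c) of Proposition~\ref{prop:cohom1repweylgroup}\eqref{2} (a weight is a multiple of a $K^+$-root iff that root vanishes on $\mathfrak t\cap\mathfrak h$), and in the non-complex case both the independence and the non-vanishing of $K^+$-roots on $\mathfrak t\cap\mathfrak h$ hold automatically by Proposition~\ref{prop:cohom1repweylgroup}\eqref{1}(a),(c). There is also no need to treat the two rank cases separately here, since all $T$-fixed points lie in the maximal-rank singular orbit(s) and $N_G(T)$-equivariance reduces the check to the single point $eK^+$; your worry about ``accidental linear dependences not visible from $\mathfrak t\cap\mathfrak h$'' dissolves once the first two points above are in place.
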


\begin{proof}
We already argued at the beginning of this section that (i) is equivalent to (1) and (2) in the definition of a GKM action, i.e.\
 to the vanishing of the odd cohomology and finiteness of the fixed point set. 
We need to verify that in presence of (i), Condition (3') in Section \ref{sec:gkmtheory} is equivalent to Condition (ii). 
We may assume that $\rk G=\rk{K^+}$ and that the maximal torus $T$ is contained in $K^+$. 
Below we will show that in the GKM case any maximal torus in $H$ uniquely extends to a maximal torus in $G$, 
so it suffices to prove the equivalence for this torus $T$.

We set $p\coloneqq eK^+\in G/K^+$. We first notice that pairwise linear independence of the weights of the isotropy representation at 
all $T$-fixed points in $G/K^+$ is equivalent to this condition at~$p$, by equivariance with respect to the normaliser $N_G(T)$. The isotropy $K^+$-representation decomposes as
$$T_pM = T_p(G/K^+)\oplus \nu_p (G/K^+).$$ 
The weights of $T_p(G/K^+)$ are given by $(\Delta_G\setminus \Delta_{K^+})/\pm$ and are hence pairwise linearly independent. 
Notice that the normal representation on $\nu_p(G/K^+)$ has as a weight a linear form modulo $\pm 1$
with kernel ${\mathfrak{t}} \cap {\mathfrak{h}}$, as $T\cap H$ occurs as a codimension one isotropy. 
Thus, the question whether a linear form is linearly independent from this weight is equivalent to ask whether this linear 
form does not vanish on ${\mathfrak{t}} \cap {\mathfrak{h}}$. As the Weyl group $\W(K^{+}_{0})$ of the identity 
component $K^+_0$ of $K^+$ acts transitively on the weights of $\nu_p(G/K^+)$ by Proposition \ref{prop:cohom1repweylgroup}, 
and $\W(K^{+}_{0})$ leaves invariant $\Delta_G\setminus \Delta_{K^+}$, the condition that the weights of $\nu_p(G/K^+)$ 
are pairwise linearly independent to the weights of $T_p(G/K^+)$ is equivalent to the condition that no element 
in $\Delta_G\setminus \Delta_{K^+}$ vanishes on ${\mathfrak{t}}\cap {\mathfrak{h}}$. Finally, by Proposition \ref{prop:cohom1repweylgroup}, 
in case the normal $K^+$-representation does not admit any invariant complex structure, 
the condition that the weights of $\nu_p(G/K^+)$ are pairwise linearly independent is automatically satisfied, 
and the weights are not multiples of elements in $\Delta_{K^+}$. In case it admits an invariant complex structure, the condition that the weights of $\nu_p(G/K^+)$ 
are pairwise linearly independent is equivalent to no $K^+$-root vanishing on ${\mathfrak{t}}\cap {\mathfrak{h}}$, by Proposition \ref{prop:cohom1repweylgroup}.

The additional uniqueness statement follows because no root of $G$ vanishes on ${\mathfrak{t}}\cap {\mathfrak{h}}$, hence ${\mathfrak{t}}\cap {\mathfrak{h}}$ contains regular elements.
\end{proof}

We now show by an example that the second condition in the above theorem does not follow from the first.

\begin{example}
Consider the standard $S^3$-action on $S^4$ realising it as a cohomogeneity one manifold with group diagram 
$(S^3,S^3,S^3,\{e\})$. This example satisfies the first, but not the second condition in Theorem \ref{thm:gkmconditioncohom1},
as $\mathfrak t \cap \mathfrak h$ is trivial. Indeed, the one-skeleton of $S^4$ for the action of a maximal torus in $S^3$
is all of $S^4$, contradicting Definition \ref{defn:gkm}.
\end{example}

\begin{remark}
\label{rem:nonabeliangkm}
We briefly mention the relation of our results to non-abelian GKM theory, as developed in \cite{goertsches_mare}. 
There, an action of a compact connected Lie group $G$ on a compact connected manifold $M$ is said to satisfy the non-abelian GKM conditions if 
\begin{enumerate}
\item the $G$-action is equivariantly formal,
\item the union $M_{\max}$ of those orbits whose isotropy group has the same rank as $G$ consists of only finitely many orbits,
\item for every $p\in M_{\max}$ the weights of the isotropy representation of $G_p$ on $T_pM$ are pairwise linearly independent.
\end{enumerate} 
It was shown in \cite[Lemma 4.2]{goertsches_mare} that the non-abelian GKM conditions for the action of a compact connected 
Lie group $G$ on a compact connected orientable manifold are equivalent to the ordinary GKM conditions of a maximal torus 
$T\subset G$. This shows that for a cohomogeneity one $G$-manifold $M$, represented by a group diagram $(G,K^+,K^-, H)$, 
the $G$-action on $M$ satisfies the non-abelian GKM conditions if and only if the two conditions in Theorem \ref{thm:gkmconditioncohom1} hold. 
\end{remark}

\section{The GKM graph}
\label{gkm_graph_sec}

 As before, we fix a group diagram $(G,K^+, K^-, H)$ with associated cohomogeneity one $G$-manifold $M$. 
 We choose an auxiliary $G$-invariant Riemannian metric on $M$, as well as a normal geodesic $\gamma \colon \R\to M$ 
 such that the restriction of $\gamma$ to $[0,1]$ meets every $G$-orbit exactly once, with $G_{\gamma(0)} = K^+$, $G_{\gamma(1)} = K^-$, 
 and $G_{\gamma(t)} = H$ for $0<t<1$. The choice of base points $\gamma(0)$ and $\gamma(1)$ identifies 
 the non-principal orbits with the coset spaces $G/K^\pm$. We consider a maximal torus $T\subset G$ containing 
 a maximal torus of $H$ and assume that its action on $M$ is of GKM type (recall that by Theorem \ref{thm:gkmconditioncohom1}, 
 in this situation $T$ is the only maximal torus containing a given maximal torus of $H$). 
 
 In this section we describe the GKM graph of the $T$-action in the two cases distinguished at the beginning of Section \ref{core}.
 The graph contains as subgraphs the GKM graphs of 
 those singular orbits $G/K^\pm$ for which $\rk{K^\pm} = \rk{G}$, which were described in \cite{guillemin_holm_zara}. 
\begin{remark}
It would be interesting to investigate if the existence of a $G$-invariant (almost) complex structure on the cohomogeneity one manifold $M$ is encoded in the GKM graph, as is the case in the homogeneous setting \cite[Theorem 3.2]{guillemin_holm_zara}.
\end{remark}

\subsection{First case} 
\label{subsec:firstcase}

We assume $\rk{K^{\pm}} = \rk{G}$. Because, as recalled above, $T$ is the unique maximal torus containing a maximal torus of $H$, we have $T\subset K^\pm$. Hence, both non-regular orbits $G/K^\pm$ are GKM manifolds and $\gamma(0)$ and $\gamma(1)$ are $T$-fixed points.
We may then apply Theorem \ref{thm:ghzhomspace} to describe the GKM graphs of the two non-regular orbits. 
It remains to understand the $T$-invariant two-spheres not contained in any non-regular orbit. 

As $T$ contains a maximal torus of $H$, the geodesic $\gamma$ lies inside a $T$-invariant two-sphere 
whose two fixed points are precisely the intersection points of $\gamma$ with the two singular orbits. Note that this implies directly that in this case, the Weyl group of the cohomogeneity one action is ${\mathbb{Z}}_2$. Equivalently, the twist of any normal geodesic 
is one (cf.\ Section~\ref{subsec:cohom1}).
This $T$-invariant two-sphere yields an edge connecting the vertices $eK^+$ and $eK^-$ 
whose label is (up to sign) the differential $\lambda$ of the character $T\to T/(T\cap H) \cong S^1$,
i.e.\ $\lambda \in \mathfrak t^*$ with kernel ${\mathfrak{t}}\cap {\mathfrak{h}}$. 

By Proposition \ref{prop:cohom1repweylgroup}, the Weyl group $\W(K^+)$ acts transitively on the 
weights of the slice representation on $\nu_{\gamma(0)}(G/K^+)$. 
Geometrically, the normaliser $N_{K^+}(T)$ acts transitively 
on the set of $T$-invariant two-spheres emerging from $\gamma(0)$ in direction normal to $G/K^+$. 
Let $\W'\subset \W(K^+)$ be 
the stabiliser of the weight $\lambda$.  
Notice that the representatives of $\W'$ leave invariant the two-sphere containing $\gamma$, so 
that $\W'\subset \W(K^-)$. 
Now, recall that $\W(G)$ acts transitively on the $T$-fixed points in $G/K^+$. Hence combining its action
with the $\W(K^+)$-action on the weights of the normal representation at each fixed point, we find all edges departing from the
singular orbit $G/K^+$ and their end-points represented by $T$-fixed points in $G/K^-$. 

\begin{proposition}
\label{prop:normal_weights_1}
The edges in the GKM graph of the $T$-action not contained in any singular orbit are given as follows: for any $w\in \W(G)/\W'$ there is
an edge between the vertices $[w]\in \W(G)/\W(K^+)$ and $[w]\in \W(G)/\W(K^-)$, with label $w\cdot \lambda$.
\end{proposition}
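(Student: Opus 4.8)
The plan is to describe the $T$-invariant two-spheres not lying in a singular orbit by combining the $\W(G)$-action (which permutes the $T$-fixed points on both singular orbits) with the local model around the distinguished $T$-fixed point $\gamma(0) \in G/K^+$. First I would recall the setup: the geodesic $\gamma$ lies in one $T$-invariant two-sphere $N_0$, and by Proposition \ref{prop:cohom1repweylgroup} the normaliser $N_{K^+}(T)$ acts transitively on the set of $T$-invariant two-spheres emanating from $\gamma(0)$ in the directions normal to $G/K^+$, with $N_0$ being one of them. Hence every such sphere is of the form $n \cdot N_0$ for $n \in N_{K^+}(T)$, and the sphere $n \cdot N_0$ has as its other endpoint the fixed point $n \cdot \gamma(1) \in G/K^-$, which represents the vertex $[n] \in \W(G)/\W(K^-)$. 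Since the representatives of $\W' = \mathrm{Stab}_{\W(K^+)}(\lambda)$ fix the weight $\lambda$, they leave $N_0$ invariant (their action fixes both $\gamma(0)$ and the tangent direction of $\gamma$), so $n \cdot N_0$ depends only on the class $[n] \in \W(K^+)/\W'$, and distinct such classes give distinct spheres since $\W(K^+)$ acts simply transitively on the normal weights at $\gamma(0)$ (weight spaces are one-dimensional). The label of $n \cdot N_0$ is $n \cdot \lambda$, obtained by transporting the label $\lambda$ of $N_0$ via the $N_{K^+}(T)$-action.

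Next I would globalise by acting with $\W(G) = N_G(T)/T$. For $g \in N_G(T)$, the sphere $g \cdot N_0$ connects $g\cdot\gamma(0)$, representing $[g] \in \W(G)/\W(K^+)$, with $g\cdot\gamma(1)$, representing $[g] \in \W(G)/\W(K^-)$, and carries the label $g \cdot \lambda$. Every $T$-invariant two-sphere not contained in a singular orbit arises this way: its two fixed points lie one on each singular orbit (a sphere with both fixed points on, say, $G/K^+$ would be contained in $G/K^+$ by the GKM description of that orbit in Theorem \ref{thm:ghzhomspace}, since $M_1 \cap G/K^+$ is already accounted for there), so applying a suitable $g \in N_G(T)$ we may assume one fixed point is $\gamma(0)$, and then the sphere is one of the $n \cdot N_0$ by the transitivity of $N_{K^+}(T)$ on the normal spheres at $\gamma(0)$. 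Thus the spheres are indexed by $\W(G)$-orbits of pairs $([n], n\cdot N_0)$, i.e.\ by $\W(G) \times_{\W(K^+)} (\W(K^+)/\W')$; I would identify this quotient with $\W(G)/\W'$ by noting that the stabiliser in $\W(G)$ of the sphere $N_0$ is exactly $\W'$ (it must fix $\gamma(0)$, hence lie in $\W(K^+)$, and fix the tangent direction of $\gamma$, hence lie in $\W'$), giving the stated indexing: for $w \in \W(G)/\W'$, an edge between $[w] \in \W(G)/\W(K^+)$ and $[w] \in \W(G)/\W(K^-)$ with label $w\cdot\lambda$.

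The main obstacle I anticipate is the well-definedness and injectivity bookkeeping: verifying carefully that the sphere $g \cdot N_0$ depends only on the class $w = [g] \in \W(G)/\W'$ (requiring that $\W'$ really is the full stabiliser of $N_0$ in $\W(G)$, not just in $\W(K^+)$), and that distinct classes in $\W(G)/\W'$ yield distinct spheres. The first point uses that $N_0$ contains the distinguished fixed point $\gamma(0)$ together with the normal direction $\dot\gamma(0)$, so any $g$ stabilising $N_0$ fixes $\gamma(0)$ (forcing $g \in \W(K^+)$) and fixes the line through $\dot\gamma(0)$ up to sign, i.e.\ fixes the weight $\lambda$, so $g \in \W'$; conversely $\W' \subset \W(K^-)$ as already observed, so $\W'$ stabilises both endpoints and the sphere. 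The second point follows from Proposition \ref{prop:cohom1repweylgroup} since the normal weights at a fixed point of $G/K^+$ are pairwise linearly independent, so a two-sphere through that fixed point is determined by its label. Everything else is a routine translation between the geometric picture (invariant two-spheres, normal slice representations) and the combinatorial data (cosets of Weyl groups, weights).
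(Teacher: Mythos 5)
Your argument is correct and follows essentially the same route as the paper, which states this proposition as a summary of precisely the discussion you reconstruct: the distinguished sphere $N_0$ containing $\gamma$ with label $\lambda$, transitivity of $N_{K^+}(T)$ on the $T$-invariant spheres normal to $G/K^+$ at $\gamma(0)$, the stabiliser $\W'\subset \W(K^+)\cap\W(K^-)$, and propagation by the $\W(G)$-action; you merely make explicit the bookkeeping ($\mathrm{Stab}_{\W(G)}(N_0)=\W'$, completeness of the enumeration) that the paper leaves implicit. The only slip is the phrase ``$\W(K^+)$ acts simply transitively on the normal weights'' --- it acts transitively with stabiliser $\W'$, which is exactly what your later identification uses, so nothing is affected.
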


\begin{remark}
The number of fixed points $s$ on the singular orbit $G/K^+$ and $s'$ on $G/K^-$ 
are related to the dimension of the singular orbits by
$$ s(n-k)=s'(n-k'),$$
where $2n=\dim M$, $2k=\dim G/K^+$ and $2k'=\dim G/K^-$, as one sees by counting normal edges in the GKM graph.
\end{remark}

\subsection{Second case}
\label{subsec:secondcase}

Assume $\rk{K^+} = \rk{G}$ and $\rk{K^-} = \rk{H}$. In this case we have $T\subset K^+$, and $\gamma(0)$ is a $T$-fixed point. We may apply Theorem \ref{thm:ghzhomspace} 
to describe the GKM graph of the GKM manifold $G/K^+$.

The geodesic $\gamma$ is contained in a $T$-invariant two-sphere, 
corresponding to an edge in the GKM graph with label $\lambda$ given (up to sign) by the differential of the character $T\to T/H\cong S^1$. 
Unlike in the first case, this edge does not connect $[e] \in \W(G)/\W(K^+)$ with a vertex belonging to the other singular orbit, 
as there are no $T$-fixed points in $G/K^-$. Let us call $T'\subset H\cap T$ the maximal torus in $H$ contained in $T$.

\begin{lemma}
\label{lem:weylquotcase2}
We have $\W(K^-)/\W(H) \cong {\mathbb{Z}}_2$. There is an element $g\in N_{K^-}(T')\cap N_{K^-}(H)$ reflecting the geodesic $\gamma$ in $\gamma(1)$.
\end{lemma}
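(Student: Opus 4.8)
The plan is to exploit the fact that $K^-/H$ is a sphere, so the slice representation of $K^-$ on the normal space $V^-$ to the regular orbit is a cohomogeneity one representation with regular isotropy $H$. Since $\mathrm{rk}\,K^- = \mathrm{rk}\,H$, this representation cannot admit an invariant complex structure: a complex slice representation transitive on the unit sphere has the torus acting with full-rank isotropy only in the trivial circle case, which would force $K^- = H$ and contradict that $\gamma$ is reflected at $\gamma(1)$. Consulting the table of cohomogeneity one representations, the equal-rank cases are exactly $\mathrm{SO}(2n+1)$ acting on $\R^{2n+1}$, $\mathrm{SO}(2n)$ on $\R^{2n}$, $\mathrm{G}_2$ on $\mathrm{Im}\,\mathbb{O}$, $\mathrm{Spin}(7)$ on $\R^8$, and $\mathrm{Spin}(9)$ on $\R^{16}$ (up to the non-effectivity kernel), since in all other entries $H$ has rank strictly less than $K$. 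Actually, the cleaner route avoids the table: I would directly analyse the Weyl group $\W(K^-)$ of $K^-$ relative to $T'$.

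First I would identify $N_{K^-}(H)/H$ with the Weyl group of the cohomogeneity one $K^-$-action on $V^-$ (in the sense of Section~\ref{subsec:cohom1}), which by the Alekseevsky--Alekseevsky structure result is dihedral; but here $V^-$ is a linear sphere, so its only nonregular orbits are the fixed point $0$ and the unit sphere $K^-/H$, which forces the Weyl group of this linear cohomogeneity one action to be exactly $\mathbb{Z}_2$, generated by the antipodal reflection $v \mapsto -v$. Next I would observe that $T' \subset H \subset T$ is a maximal torus of $H$, and since $\mathrm{rk}\,H = \mathrm{rk}\,K^-$, it is also a maximal torus of $K^-$; hence $N_{K^-}(T')/T' = \W(K^-)$ in the usual sense. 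The antipodal map $-\mathrm{id}$ on $V^-$ normalises every subgroup, in particular $H$ and $T'$, and it acts on $\mathfrak{t}'$ and on the weights of $V^-$ as multiplication by $-1$, hence it is realised (as a coset in $N_{K^-}(T')/T'$) by a genuine Weyl group element $g$. Since the weights of $V^-$ span $\mathfrak{t}'^*$ (because the $K^-$-action on $V^-$ has $H$, i.e.\ $T'$ restricted, acting almost effectively — the kernel being the non-effectivity of the slice), $g$ acts as $-\mathrm{id}$ on $\mathfrak{t}'$, so in particular $g \in N_{K^-}(T') \cap N_{K^-}(H)$ as required. That this $g$ reflects $\gamma$ in $\gamma(1)$ follows because $\gamma$ near $\gamma(1)$ is the radial geodesic $t \mapsto \exp_{\gamma(1)}((1-t)v_0)$ for the slice direction $v_0 \in V^-$, and $g$ sends $v_0$ to $-v_0$, which is the continuation of $\gamma$ past $\gamma(1)$.

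For the statement $\W(K^-)/\W(H) \cong \mathbb{Z}_2$: I would note that $\W(H)$ is the subgroup of $\W(K^-)$ fixing $v_0$ (equivalently, fixing the weight-zero decomposition pointing along $v_0$), i.e.\ $\W(H) = \W(K^-)_{v_0}$, and since $\W(K^-)$ acts transitively on the weights of $V^-$ by Proposition~\ref{prop:cohom1repweylgroup}\eqref{1} — or more precisely, since $N_{K^-}(T')$ maps onto the $T'$-isotropy-conjugacy classes and $v_0$ together with $-v_0$ exhausts one such class up to sign — the index $[\W(K^-):\W(H)]$ counts $\{\pm v_0\}$-type data and equals $2$. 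An alternative and perhaps more transparent argument: $\W(K^-)/\W(H)$ is exactly the Weyl group of the cohomogeneity one $K^-$-action on $V^-$ described two paragraphs above, which we already saw is $\mathbb{Z}_2$; this is the content of identifying the dihedral Weyl group of a cohomogeneity one action with a quotient of Weyl groups of the singular isotropies, available from \cite{ale_ale}.

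The main obstacle will be making rigorous the identification of $\W(K^-)/\W(H)$ with the $\mathbb{Z}_2$ generated by the antipodal map, being careful about the non-effectivity kernel of the slice representation: a priori $g$ as constructed lies in $N_{K^-}(T')$ only modulo the subgroup acting trivially on $V^-$, so one must check this does not enlarge the relevant quotient. I expect this to be handled by working throughout with $K^-$ modulo its (finite, central) ineffective kernel on $V^-$, under which $H$, $T'$ and all Weyl groups in sight descend unchanged — indeed the ineffective kernel is contained in every isotropy group, in particular in $H$ and hence in $T'$, so it drops out of all the quotients $N_{K^-}(\cdot)/\cdot$ and the argument goes through verbatim.
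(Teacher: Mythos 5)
Your construction of the element $g$ has a genuine gap: you conflate the antipodal map $-\mathrm{id}_{V^-}\in \mathrm{O}(V^-)$ with an element of $K^-$. That map does commute with the $K^-$-action and in that sense ``normalises every subgroup'', but it is in general not in the image of $K^-\to \mathrm{SO}(V^-)$ --- already for $\mathrm{SO}(2n+1)$ acting on $\R^{2n+1}$ (one of the two equal-rank sphere pairs actually occurring here) it has determinant $-1$ --- so there is no ``genuine Weyl group element realising it'', and the ensuing claims that $g$ acts as $-\mathrm{id}$ on $\mathfrak{t}'$ and automatically normalises both $T'$ and $H$ are unjustified. Even granting $-1\in\W(K^-)$, a representative acting as $-\mathrm{id}$ on $\mathfrak{t}'$ need not reverse $\gamma'(1)$: for $K^-=\mathrm{SO}(9)\supset H=\mathrm{SO}(8)$ one has $-1\in\W(H)$ as well, so such a representative can be chosen inside $H$ and then \emph{fixes} $\gamma'(1)$. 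What transitivity of $K^-$ on the unit sphere actually gives is some $a\in K^-$ with $a\gamma'(1)=-\gamma'(1)$; this $a$ normalises $H=(K^-)_{\gamma'(1)}=(K^-)_{-\gamma'(1)}$ but has no reason to normalise $T'$. The missing step --- and the one the paper supplies --- is to note that $\mathrm{Ad}_a(T')$ and $T'$ are two maximal tori of $H$, hence conjugate by some $h\in H$; since $h$ fixes $\gamma'(1)$, the corrected element $g=ha$ still reflects the geodesic and now lies in $N_{K^-}(T')\cap N_{K^-}(H)$.

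For the first assertion the paper argues in one line: $|\W(K^-)/\W(H)|=\chi(K^-/H)=2$ by Proposition \ref{euler_weyl}, because $K^-/H$ is an even-dimensional sphere in this case. Your substitutes are shakier: Proposition \ref{prop:cohom1repweylgroup} is stated for even-dimensional $V$ and its transitivity statement cannot be quoted for $V^-$, which here is odd-dimensional and has a zero weight (the direction of $\gamma'(1)$, on which transitivity manifestly fails); and the identification of $\W(K^-)/\W(H)$ with the cohomogeneity one Weyl group of the linear slice action is not available from \cite{ale_ale} --- the latter is a subquotient of $N_{K^-}(H)/H$, not of $N_{K^-}(T')/T'$, and equating the two is essentially what needs proof. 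Finally, your reading of the table is off: $\mathrm{SO}(2n)\supset \mathrm{SO}(2n-1)$, $\mathrm{Spin}(7)\supset \mathrm{G}_2$ and $\mathrm{Spin}(9)\supset \mathrm{Spin}(7)$ all have rank difference one; the equal-rank entries are only $\mathrm{SO}(2n+1)\supset\mathrm{SO}(2n)$ and $\mathrm{G}_2\supset \mathrm{SU}(3)$ --- though you abandon that route, so this does not affect the main argument.
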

\begin{proof}
By Proposition \ref{euler_weyl}, the order of $\W(K^-)/\W(H)$ equals the Euler characteristic of the even-dimensional sphere $K^-/H$, which is two. 
This shows the first statement. As $K^-$ acts transitively on the unit sphere in the normal space at $\gamma(1)$, we find an element in $K^-$ sending $\gamma'(1)$ to $-\gamma'(1)$. 
This element normalises $H$, but is not contained in $H$. By the theorem of maximal tori, we can multiply it by some element of $H$ to obtain an element $g$ which additionally normalises $T'$.
\end{proof}
The above statement should be compared with \cite[Lemma 5.2]{ale_ale}.
Let $g$ be an element from Lemma \ref{lem:weylquotcase2}. 
The geodesic $\gamma$ starts from $\gamma(0) = eK^+\in G/K^+$, moves to $\gamma(1)=eK^-$ in $G/K^-$, and goes back to $g\gamma(0)\in G/K^+$, which is a $T$-fixed point by the following
\begin{proposition} 
The maximal torus $T\subset G$ is also contained in $gK^+g^{-1}$.
\end{proposition}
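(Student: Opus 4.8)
The plan is to show that $g$ normalises $T$, which immediately gives $T = gTg^{-1} \subset gK^+g^{-1}$. The key observation is that $g$ was chosen in Lemma \ref{lem:weylquotcase2} to lie in $N_{K^-}(T')$, where $T' = T\cap H$ is a maximal torus of $H$. So $gT'g^{-1} = T'$, and $gTg^{-1}$ is a maximal torus of $G$ containing $T' = gT'g^{-1}$. By the uniqueness part of Theorem \ref{thm:gkmconditioncohom1}, there is a \emph{unique} maximal torus of $G$ containing the given maximal torus $T'$ of $H$, namely $T$ itself. Hence $gTg^{-1} = T$, and therefore $T \subset gK^+g^{-1}$.

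Concretely, I would first recall that $g \in N_{K^-}(T') \cap N_{K^-}(H)$ by Lemma \ref{lem:weylquotcase2}, so conjugation by $g$ preserves $T'$. Next I would note that $gTg^{-1}$ is again a maximal torus of $G$ (conjugation by any group element sends maximal tori to maximal tori), and since $T' \subset T$ we have $gT'g^{-1} = T' \subset gTg^{-1}$, so $gTg^{-1}$ is a maximal torus of $G$ containing $T'$. Finally I would invoke the last sentence of Theorem \ref{thm:gkmconditioncohom1}: under the standing GKM assumption of this subsection, the maximal torus of $G$ containing a given maximal torus of $H$ is unique. Applying this with the maximal torus $T'$ of $H$ forces $gTg^{-1} = T$, whence $T = gTg^{-1} \subset gK^+g^{-1}$ since $T \subset K^+$.

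There is essentially no obstacle here: the statement is a short formal consequence of the uniqueness clause in Theorem \ref{thm:gkmconditioncohom1} together with the defining property of $g$ from Lemma \ref{lem:weylquotcase2}. The only point that needs a moment of care is making sure $g$ really does normalise $T'$ (and not merely some conjugate of it) — but this is exactly what was arranged in the proof of Lemma \ref{lem:weylquotcase2}, where after finding a reflecting element in $K^-$ one multiplies by a suitable element of $H$ to additionally normalise $T'$. So the geometric content — that $g$ sends the $T$-fixed point $\gamma(0)$ to another $T$-fixed point $g\gamma(0)$ — is entirely captured by the algebraic statement $T \subset gK^+g^{-1}$.
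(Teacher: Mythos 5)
Your proof is correct, but it takes a genuinely different route from the paper's. The paper observes that the reflected geodesic exhibits $M$ with a second group diagram $(G, gK^+g^{-1}, K^-, H)$ --- the principal isotropy is still $H$ because $g$ normalises $H$ --- and then applies the final clause of Theorem \ref{thm:gkmconditioncohom1} to that diagram: the unique maximal torus containing the given maximal torus of $H$ is still $T$ (since $H$, and hence $T'$, is unchanged), and by the theorem it must lie in the maximal-rank singular isotropy group $gK^+g^{-1}$. You instead use the other half of the data from Lemma \ref{lem:weylquotcase2}, namely $g\in N_{K^-}(T')$, to see that $gTg^{-1}$ is a maximal torus of $G$ containing $T'=gT'g^{-1}$, so the uniqueness clause forces $gTg^{-1}=T$, and then $T=gTg^{-1}\subset gK^+g^{-1}$ follows from $T\subset K^+$. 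Both arguments hinge on the same uniqueness statement; yours is purely algebraic, bypasses the geometric re-reading of the group diagram, and has the added benefit of establishing $g\in N_G(T)$ --- a fact the paper needs (and derives by essentially your argument) immediately afterwards, when it embeds $N_{K^-}(T')$ into $N_G(T)$ to obtain the map $\W(K^-)/\W(H)\to \W(G)/\W(K^+)$. The paper's version, by contrast, uses only that $g$ normalises $H$, not that it normalises $T'$.
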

\begin{proof}
By Theorem \ref{thm:gkmconditioncohom1}, the GKM condition implies that $T \subset K^+$. Moreover, the cohomogeneity 
one manifold $M$ is also determined by the group diagram $(G,gK^+g^{-1},K^-,H)$ (consider the reflected geodesic $\gamma$), 
so applying the theorem again, $T$ is also contained in $gK^+g^{-1}$.
\end{proof}
As the order of the cohomogeneity one Weyl group is equal to the number of minimal geodesic segments intersecting the regular part, 
the cohomogeneity one Weyl group is the dihedral group with four elements, and the twist of $\gamma$ is two.

As we know from Theorem \ref{thm:gkmconditioncohom1}, $T$ is the unique maximal torus containing $T'$, 
so the normaliser $N_{K^-}(T')$ is naturally a subgroup of $N_G(T)$, and we obtain a diagram of inclusions
\[
\begin{tikzcd}
\W(H) \arrow[hook]{r} \arrow[hook]{d} & \W(K^+) \arrow[hook]{d} \\
\W(K^-) \arrow[hook]{r}& \W(G) 
\end{tikzcd}
\]
hence a natural map ${\mathbb{Z}}_2\cong \W(K^-)/\W(H) \to \W(G)/\W(K^+)$. 
The image of the non-trivial element under this map, i.e.\ the coset of $g$, is the second fixed point of 
our distinguished normal sphere. Again, as in the first case, the other normal edges are given by letting $\W(G)$ act.

Thus, using the same notation as in the section above we have
\begin{proposition}
The normal edges in the GKM graph of the $T$-action are given as follows: for any $w\in \W(G)/\langle \W',g\rangle$ there is
an edge between the vertices $[w]\in \W(G)/\W(K^+)$ and $[wg]\in \W(G)/\W(K^+)$, with label $w\cdot \lambda$.
\end{proposition}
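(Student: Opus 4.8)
The plan is to identify all $T$-invariant two-spheres that are not contained in the unique GKM singular orbit $G/K^+$, and to show that they are exactly the ones described. As in the first case, the starting point is the slice representation of $K^+$ at $\gamma(0)=eK^+$: by Proposition~\ref{prop:cohom1repweylgroup} the Weyl group $\W(K^+)$ acts transitively on the weights of $\nu_{\gamma(0)}(G/K^+)$, so the normaliser $N_{K^+}(T)$ acts transitively on the set of $T$-invariant two-spheres emerging from $\gamma(0)$ in normal direction. All of these spheres are translates of the distinguished one containing $\gamma$, whose label is $\lambda$ (the differential of $T\to T/H\cong S^1$) and whose two fixed points are $[e]\in\W(G)/\W(K^+)$ and the coset of the element $g$ of Lemma~\ref{lem:weylquotcase2}, as established just above. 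Thus I would first record that the stabiliser of the weight $\lambda$ in $\W(K^+)$ is the subgroup $\W'$, and that the two $T$-fixed points of this sphere are interchanged by (a representative of) $g$; hence the sphere is fixed setwise by the subgroup $\langle \W', g\rangle$ of $N_G(T)/T$, and the set of $T$-invariant normal two-spheres through $\gamma(0)$ is in bijection with $\W(K^+)/\W'$.

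Next I would let $\W(G)=N_G(T)/T$ act on the whole configuration. Since $\W(G)$ permutes the $T$-fixed points of $G/K^+$ transitively (Theorem~\ref{thm:ghzhomspace}(1)) and carries $T$-invariant two-spheres to $T$-invariant two-spheres, every $T$-invariant normal two-sphere is of the form $w\cdot N_0$ for the distinguished sphere $N_0$ and some $w\in\W(G)$, and $w\cdot N_0$ connects the vertices $[w]$ and $[wg]$ in $\W(G)/\W(K^+)$ with label $w\cdot\lambda$ (using the transformation rule $w\cdot\alpha = \alpha\circ\mathrm{Ad}_w^{-1}$ for the label, exactly as in Theorem~\ref{thm:ghzhomspace}(2)). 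Finally I would determine the redundancy: $w\cdot N_0 = w'\cdot N_0$ iff $w^{-1}w'$ fixes $N_0$ setwise, i.e.\ $w^{-1}w'\in\langle\W',g\rangle$. This gives the claimed parametrisation by $w\in\W(G)/\langle\W',g\rangle$.

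The one point that needs a little care — and which I expect to be the main obstacle — is checking that the listed edges are genuinely \emph{all} the normal edges, i.e.\ that no $T$-invariant two-sphere outside $G/K^+$ has been missed and none has been double-counted. Every such sphere meets $M_1$, hence contains a $T$-fixed point, and since in the second case all $T$-fixed points lie in $G/K^+$ (there are none in $G/K^-$, as $\rk K^- = \rk H < \rk G$), each such sphere emanates from a fixed point of $G/K^+$ in a direction normal to $G/K^+$ — the tangential directions account precisely for the edges within the GKM subgraph of $G/K^+$ coming from Theorem~\ref{thm:ghzhomspace}. Combined with the transitivity of $\W(K^+)$ on normal weights at each fixed point and of $\W(G)$ on the fixed points themselves, this shows the normal edges are exactly the $\W(G)$-orbit of $N_0$, and the stabiliser computation above pins down the indexing set. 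The coincidence of the label of $N_0$ at its two endpoints — that $\lambda$, pulled back along the character at $g\gamma(0)$, agrees with $w\cdot\lambda$ up to sign when $w=g$ — follows because $g$ normalises $T'$ and $H$, so it intertwines the two relevant characters $T\to T/H$, which is the geometric content already extracted from Lemma~\ref{lem:weylquotcase2} and the proposition preceding this statement.
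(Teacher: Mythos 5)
Your argument is correct and follows essentially the same route as the paper: the paper's justification for this proposition is precisely the surrounding discussion in Section \ref{subsec:secondcase} (the distinguished normal sphere through $\gamma$ with label $\lambda$, its second fixed point being the coset of $g$ via Lemma \ref{lem:weylquotcase2} and the preceding proposition, and then letting $\W(G)$ act as in the first case, with $\langle\W',g\rangle$ as the setwise stabiliser of that sphere). Your additional care in checking that every $T$-invariant two-sphere outside $G/K^+$ must emanate from a fixed point of $G/K^+$ in a normal direction is a worthwhile explicit completeness check that the paper leaves implicit.
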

\begin{remark}
The description of the GKM graph of the $T$-action determines the equivariant cohomology of the cohomogeneity one $G$-action as an $H^*(BT)$-algebra  via \eqref{eq:eqcohomgkm}. Notice that the equivariant cohomology of an arbitrary cohomogeneity one action was determined in \cite{CGHM}.
\end{remark}

\begin{remark}\label{rem:non-abeliangkm2}
Recall from Remark \ref{rem:nonabeliangkm} that the cohomogeneity one manifolds we consider here 
satisfy the non-abelian GKM conditions \cite{goertsches_mare}. See \cite[Remark 6.6]{goertsches_mare} 
for a description of the non-abelian GKM graph of a cohomogeneity one GKM action and 
the consequences for the equivariant cohomology $H^*_G(M)$. 
\end{remark}

\section{Examples}
\label{sec:examples}

In this section we present many examples of cohomogeneity one $G$-manifolds and use Theorem \ref{thm:gkmconditioncohom1} to 
establish when a maximal torus of $G$ acts in a GKM fashion. We also determine the GKM graph of some of these examples.

Let us first give a general description of the pictures we are going to show. 
If the manifold has group diagram $(G,K^+,K^-,H)$ and $T$ is a maximal torus in $G$ containing a maximal torus of $H$, we draw $T$-fixed
points in $G/K^+$ on the left and $T$-fixed points in $G/K^-$ on the right. The continuous
lines give the edges of the homogeneous graphs for the singular orbits, whereas dotted lines denote
the edges corresponding to normal two-spheres. Each vertex is labelled by an element of $\W(G)/\W(K^\pm)$,
 and edges are labelled by weights of the action, according to the above results. 
 
We first consider the special case of a cohomogeneity one $G$-action on a simply-connected compact 
even-dimensional manifold $M$, such that $G$ 
acts 
effectively and with at least one fixed point. Such actions were classified up to 
$G$-equivariant diffeomorphism in \cite[Section 2]{ale_pode} as well as 
\cite[Section 1.21]{hoelscher2}. If the action has two fixed points, then it is given by a 
group
diagram
$$(G,G,G,H),$$
such that $G/H$ is an odd-dimensional sphere. In this case, there is a
representation $G\to \mathrm{SO}(V)$ of cohomogeneity one with principal 
isotropy group $H$, and $M$ is the unit sphere in $V\oplus {\mathbb{R}}$, 
with 
the action $g\cdot (v,t) = (gv,t)$. It follows from Proposition 
\ref{prop:cohom1repweylgroup} that the action of a maximal torus $T\subset G$ 
is 
GKM if and only if the $G$-representation on $V$ is, up to effectivity, not the standard representation of ${\mathrm{Sp}}(n)$ on $\mathbb{H}^n$.
The GKM graph then has exactly two vertices, corresponding to the two $G$-fixed points. For every weight $\lambda$ of the representation $V$ there is one dotted
edge connecting the two vertices with label $\lambda$.

\begin{example}\label{ex:6-sphere}
Consider the six-sphere $S^6$. As ${\mathrm{SU}}(3)/{\mathrm{SU}}(2)=S^5$, the group diagram 
\[
(G,K^+,K^-,H) = (\mathrm{SU}(3),\mathrm{SU}(3), \mathrm{SU}(3), \mathrm{SU}(2))
\]
describes a cohomogeneity one action of ${\mathrm{SU}}(3)$ on $S^6$, where we consider ${\mathrm{SU}}(2)$ embedded as the upper left block in ${\mathrm{SU}}(3)$. The two-dimensional diagonal maximal torus $T\subset {\mathrm{SU}}(3)$ acts in a GKM fashion by the argument above, and its GKM graph is given in the first configuration in Figure \ref{fig:graphs6}. Explicitly, none of the roots $e_1-e_2,e_2-e_3 = e_1+2e_2,e_1-e_3 = 2e_1+e_2$ of ${\mathrm{SU}}(3)$ vanishes on the Lie algebra of the diagonal maximal torus in ${\mathrm{SU}}(2)$. Here, as usual, the $e_i$'s denote the elements of the dual basis of the standard basis of the Lie algebra of the diagonal torus in ${\mathrm{U}}(3)$, as well as their restrictions to ${\mathfrak{t}}$. The weight of the isotropy representation vanishing on this Lie algebra is $\pm e_3 = \pm(e_1+e_2)$, and the other two weights $\pm e_1$ and $\pm e_2$ are obtained by applying the Weyl group $\W({\mathrm{SU}}(3))$, which is the permutation group $S_3$. 

The sphere $S^6$ also admits a cohomogeneity one action without fixed points \cite{podesta_spiro},
with group diagram 
$$(\mathrm{SU}(2)^2,\mathrm{SU}(2)\times S^1,\Delta\mathrm{SU}(2),\Delta S^1),$$
where $\Delta$ denotes a diagonal embedding in $\mathrm{SU}(2)^2$. This action appears as Example $2_6H$ in Table \ref{Hoelscher2} below.
Observe that both conditions of Theorem~\ref{thm:gkmconditioncohom1} are satisfied, and hence also in this case we have a GKM action of a two-dimensional torus $T^2$ on $S^6$. We describe now its GKM graph. 
The $T^2$-fixed points are both contained in the singular orbit $G/K^+$ (hence we are in Case~\ref{subsec:secondcase}). Observe that
the quotient $\W(G)/\W(K^+)$
is isomorphic to $(\mathbb Z_2 \times \mathbb Z_2)/\mathbb Z_2 \cong \mathbb Z_2$. We denote by $\sigma$ the non-trivial element in the latter.
Thus the fixed points in $G/K^+$ are $[e]$ and $[\sigma]$. Notice also that $\W(K^-)/\W(H) = \mathbb Z_2$, which is consistent with Lemma \ref{lem:weylquotcase2}. 
Denote the differentials of the projections $T^2 = S^1 \times S^1\to S^1$ onto the factors by $f_1$ and $f_2$. Then the roots of $\mathrm{SU}(2)^2$ are $\pm 2f_1$ and $\pm 2f_2$.

We then have that the edge on $G/K^+$ is labelled by $\pm 2f_2$. 
One of the normal edges is labelled by the differential of the character $T^2 \to T^2/\Delta S^1 = S^1$, which is given by $\pm (f_1-f_2)$,
and the Weyl group of each copy of $\mathrm{SU}(2)$ just switches the signs of either $f_1$ or $f_2$, so the remaining label is $\pm (f_1+f_2)$ (see Figure \ref{fig:graphs6}).

\begin{figure}[htb]
\begin{center}
\begin{tikzpicture}[scale=1.2]
\filldraw (0,2) circle (1.5pt) 
          (3,2) circle (1.5pt);
\draw[dotted] (0,2) .. node[yshift = 0.2cm] {$e_1$} controls (1,3) and (2,3) .. (3,2);
\draw[dotted] (0,2) .. node[yshift = 0.2cm] {$e_1+e_2$} controls (1.5,2) .. (3,2);
\draw[dotted] (0,2) .. node[yshift = 0.2cm] {$e_2$} controls (1,1) and (2,1) .. (3,2);
\end{tikzpicture} \hspace{1.5 cm}
\begin{tikzpicture}[scale=1.2]
\filldraw (0,0) circle (1.5pt) node[left] {$[\sigma]$}
          (0,2) circle (1.5pt) node[left] {$[e]$};
\draw (0,0) -- node[left] {$2f_2$} (0,2);
\draw[dotted] (0,0) .. controls (3,0) and (3,1) .. node[right] {$f_1+f_2$} (0,2);
\draw[dotted] (0,0) .. controls (3,1) and (3,2) .. node[right] {$f_1-f_2$} (0,2);
\end{tikzpicture}
\caption{On the left, the GKM graph of $S^6$ with group diagram
$(\mathrm{SU}(3),\mathrm{SU}(3),\mathrm{SU}(3),\mathrm{SU}(2))$. On the right,
the GKM graph of $S^6$ with group diagram $(\mathrm{SU}(2)^2,\mathrm{SU}(2)\times S^1,\Delta\mathrm{SU}(2),\Delta S^1)$}
\label{fig:graphs6}
\end{center}
\end{figure}
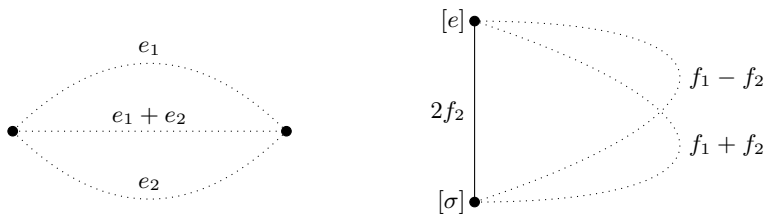
Dividing out by the kernel $\mathbb Z_2$ of the $\mathrm{SU}(2)^2$-action one gets an effective cohomogeneity one action of $\mathrm{SO}(4)$ on $S^6$ with group diagram $$(\mathrm{SO}(4), \mathrm{U}(2), \mathrm{SO}(3), S^1).$$
Observe that the $\mathrm{SO}(4)$-action extends to the transitive $G_2$-action on $S^6$, as well as the $\mathrm{SU}(3)$-action on $S^6$ considered above. As both $\mathrm{SO}(4)$ and $\mathrm{SU}(3)$ have
rank two, the induced $T^2$-actions are conjugate.
\end{example}

It was shown in \cite[Section 2]{ale_pode} and \cite[Proposition 1.23]{hoelscher2} that an effective cohomogeneity one $G$-action on a compact, simply connected manifold with exactly one fixed point is one of the following:
\begin{enumerate}
\item The ${\mathrm{SU}}(n)$- or ${\mathrm{U}}(n)$-action on ${\mathbb{C}}P^n$ 
given by $A\cdot [z_0,\ldots , z_n] = [z_0,A(z_1,\ldots,z_n)]$, with the group 
diagrams respectively given by
\[
({\mathrm{SU}}(n),{\mathrm{SU}}(n),{\mathrm{S}}({\mathrm{U}}(n-1){\mathrm{U}}
(1)),{\mathrm{SU}}(n-1)),\] and
\[({\mathrm{U}}(n),{\mathrm{U}}(n),{\mathrm{U}}(n-1){\mathrm{U}}(1),{\mathrm{U}}
(n-1)).\]
\item The ${\mathrm{Sp}}(n)$-action on ${\mathbb{H}}P^n$ given by $A\cdot 
[x_0,\ldots , x_n] = [x_0,A(x_1,\ldots,x_n)]$, with group diagram 
\[
({\mathrm{Sp}}(n),{\mathrm{Sp}}(n),{\mathrm{Sp}}(n-1){\mathrm{Sp}}(1),{\mathrm{
Sp}}(n-1)).
\]
\item The ${\mathrm{Sp}}(n)\times {\mathrm{Sp}}(1)$- or ${\mathrm{Sp}}(n)\times 
{\mathrm{U}}(1)$-action on ${\mathbb{H}} P^n$ given by $(A,p)\cdot 
[x_0,\ldots,x_n] = [px_0,A(x_1,\ldots,x_n)]$, with group diagrams respectively given by
\[
({\mathrm{Sp}}(n)\times {\mathrm{Sp}}(1),{\mathrm{Sp}}(n)\times 
{\mathrm{Sp}}(1),{\mathrm{Sp}}(n-1){\mathrm{Sp}}(1)\times 
{\mathrm{Sp}}(1),{\mathrm{Sp}}(n-1)\Delta{\mathrm{Sp}}(1))
\]
and
\[
({\mathrm{Sp}}(n)\times {\mathrm{U}}(1),{\mathrm{Sp}}(n)\times 
{\mathrm{U}}(1),{\mathrm{Sp}}(n-1){\mathrm{Sp}}(1)\times 
{\mathrm{U}}(1),{\mathrm{Sp}}(n-1)\Delta{\mathrm{U}}(1)).
\]
\item The ${\mathrm{Sp}}(n)$-action on ${\mathbb{C}}P^{2n}$ given by $A\cdot 
[z_0,x_1,\ldots,x_n] = [z_0,A(x_1,\ldots,x_n)]$, where $(z_0,x_1,\ldots,x_n) \in 
{\mathbb{C}}\oplus {\mathbb{H}}^n\cong {\mathbb{C}}^{2n+1}$, with group diagram
\[
({\mathrm{Sp}}(n),{\mathrm{Sp}}(n),{\mathrm{Sp}}(n-1){\mathrm{U}}(1), 
{\mathrm{Sp}}(n-1)).
\]
\item The ${\mathrm{Sp}}(n)\times {\mathrm{U}}(1)$-action on 
${\mathbb{C}}P^{2n}$ given by $$(A,p)\cdot [z_0,x_1,\ldots,x_n] = 
[pz_0,A(x_1,\ldots,x_n)],$$ where $(z_0,x_1,\ldots,x_n) \in {\mathbb{C}}\oplus 
{\mathbb{H}}^n\cong {\mathbb{C}}^{2n+1}$, with group diagram
\[
({\mathrm{Sp}}(n)\times {\mathrm{U}}(1),{\mathrm{Sp}}(n)\times 
{\mathrm{U}}(1),{\mathrm{Sp}}(n-1){\mathrm{U}}(1)\times {\mathrm{U}}(1), 
{\mathrm{Sp}}(n-1)\Delta{\mathrm{U}}(1)).
\]
\item The isotropy representation of the Cayley plane (see \cite{iwata})
${\mathbb{O}}P^2 = {\mathrm{F}}_4/{\mathrm{Spin}}(9)$, with group diagram
\[
({\mathrm{Spin}}(9), {\mathrm{Spin}}(9), {\mathrm{Spin}}(8), 
{\mathrm{Spin}}(7)).
\]
\end{enumerate}
In all these examples, products of groups denoted like 
${\mathrm{U}}(n-1){\mathrm{U}}(1)$ mean block-diagonally embedded groups, whereas
$\Delta$ indicates a diagonally embedded group.

\begin{proposition}\label{prop:gkmwithfp} The action of a maximal torus 
$T\subset G$ on $M$ is GKM in cases (1) (except for $G={\mathrm{SU}}(2)$), (3), (5) and (6). In the remaining cases 
it is not GKM.
\end{proposition}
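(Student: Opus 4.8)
The plan is to check the two conditions of Theorem \ref{thm:gkmconditioncohom1} for each of the six cases on the list, that is: (i) at least one of $K^\pm$ has maximal rank and $\rk H = \rk G - 1$, and (ii) no root of $G$ vanishes on $\mathfrak t\cap\mathfrak h$. In every case $K^+ = G$, so $K^+$ automatically has maximal rank, and one only needs to verify that $\rk H = \rk G - 1$ (equivalently, that $K^-$ also has maximal rank, since $\rk K^- \le \rk G$ and $\rk K^-\ge \rk H$ with the differences controlled by the spheres $K^\pm/H$) and then condition (ii). Since $\mathfrak t\cap\mathfrak h$ is the Lie algebra of a maximal torus of $H$, and the weight of the normal slice representation at the $T$-fixed point has kernel exactly $\mathfrak t\cap\mathfrak h$, condition (ii) is really a statement about whether the codimension-one isotropy $H$ in each slice representation $K\to\mathrm{SO}(V)$ is ``large enough''; by Proposition \ref{prop:cohom1repweylgroup} it fails precisely when the slice representation is, up to non-effectivity, the standard $\mathrm{Sp}(n)$-representation on $\mathbb H^n$.

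First I would handle the cases that work. In case (1), $\rk{\mathrm{SU}}(n) = \rk{\mathrm{S}(\mathrm U(n-1)\mathrm U(1))} = n-1$ and $\rk{\mathrm{SU}}(n-1) = n-2$, so (i) holds (and similarly for $\mathrm U(n)$, where all relevant ranks go up by... actually $\rk{\mathrm U(n)} = n = \rk{\mathrm U(n-1)\mathrm U(1)}$ and $\rk{\mathrm U(n-1)} = n-1$); the relevant slice representations are the complex type actions of $\mathrm{SU}(n-1)$ resp.\ $\mathrm U(n-1)$ on $\mathbb C^{n-1}$, which are not quaternionic, so (ii) holds by Proposition \ref{prop:cohom1repweylgroup}\eqref{2}; the exception $G = \mathrm{SU}(2)$ arises because then $H = \mathrm{SU}(1)$ is trivial, the slice $\mathbb C^1$ has trivial kernel $\mathfrak t\cap\mathfrak h = 0$, and a root of $\mathrm{SU}(2)$ does vanish there, so (ii) fails. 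In case (3), $\rk(\mathrm{Sp}(n)\times\mathrm{Sp}(1)) = n+1 = \rk(\mathrm{Sp}(n-1)\mathrm{Sp}(1)\times\mathrm{Sp}(1))$ and $\rk(\mathrm{Sp}(n-1)\Delta\mathrm{Sp}(1)) = n$, so (i) holds, and the slice $\mathbb H^n$ carries the $\mathrm{Sp}(n-1)\cdot\mathrm{Sp}(1)$ resp.\ $\mathrm{Sp}(n-1)\cdot\mathrm U(1)$ action (real resp.\ complex type, per Table 1), not the quaternionic standard one, so (ii) holds; the computation in Remark \ref{ex:weightsquat} exhibits the $2n$ pairwise linearly independent weights explicitly. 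Cases (5) and (6) are analogous: in (5) the slice is $\mathbb C^{2n} = \mathbb C\oplus\mathbb H^{n}$ acted on through $\mathrm{Sp}(n-1)\cdot\mathrm U(1)$, complex type; in (6) the slice is $\Delta_8\cong\mathbb R^8$, the real spin representation of $\mathrm{Spin}(7)$, and $\rk{\mathrm{Spin}}(9) = 4 = \rk{\mathrm{Spin}}(8)$, $\rk{\mathrm{Spin}}(7) = 3$, so (i) holds, and being of real type it satisfies (ii) automatically by Proposition \ref{prop:cohom1repweylgroup}\eqref{1}.

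Then I would dispatch the failures. In case (2), $K^- = \mathrm{Sp}(n-1)\mathrm{Sp}(1)$ has rank $n$ while $\mathrm{Sp}(n)$ has rank $n$ — so (i) is fine on ranks — but the slice representation at the fixed point is the \emph{standard} $\mathrm{Sp}(n)$-action on $\mathbb H^n$ (with principal isotropy $\mathrm{Sp}(n-1)$), which is quaternionic; by Proposition \ref{prop:cohom1repweylgroup}\eqref{2} its weights are not pairwise linearly independent (concretely $\pm e_i$ each appear with multiplicity two), so (ii) fails and the action is not GKM. Case (4): here $K^- = \mathrm{Sp}(n-1)\mathrm U(1)$, and since $\rk\mathrm{Sp}(n) = n$, $\rk(\mathrm{Sp}(n-1)\mathrm U(1)) = n$, $\rk\mathrm{Sp}(n-1) = n-1$, condition (i) is again satisfied, but the slice representation of $\mathrm{Sp}(n)$ on $\mathbb C\oplus\mathbb H^n$ restricted to the subspace with codimension-one isotropy is governed by the standard $\mathrm{Sp}(n)$ on $\mathbb H^n$ up to the $\mathrm U(1)$ factor, which is the quaternionic one, hence not pairwise linearly independent, and (ii) fails.

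The main obstacle I expect is purely bookkeeping in cases (4) and (5): one must correctly identify, in the slice representation on $\mathbb C\oplus\mathbb H^n$, which part carries the codimension-one isotropy $H$ and whether the extra $\mathrm U(1)$ makes the effective representation escape the quaternionic standard form or not — that is, disentangling the effect of the $\mathrm U(1)$ factor on the weights (in (5) it produces the complex-type $\mathrm{Sp}(n)\cdot\mathrm U(1)$ action on $\mathbb H^n$, which is fine, while in (4) there is no such saving factor acting on $\mathbb H^n$). Everything else reduces to reading off ranks and consulting Table 1 together with Proposition \ref{prop:cohom1repweylgroup}.
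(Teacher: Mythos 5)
Your proposal follows essentially the same route as the paper: both reduce to Theorem \ref{thm:gkmconditioncohom1}, note that the rank condition holds in all eight cases, and use the transitivity of $\W(G)$ on the weights of the isotropy representation at the unique $G$-fixed point together with Proposition \ref{prop:cohom1repweylgroup} to conclude that condition (ii) fails exactly when that representation is, up to non-effectivity, the standard quaternionic representation of $\mathrm{Sp}(n)$ on $\mathbb{H}^n$ --- the paper states this once uniformly, while you verify it case by case. Your conclusions are all correct, but watch the bookkeeping: the slice representation at the fixed point is a representation of $G=K^+$ with principal isotropy $H$, yet you repeatedly name it by $H$ (in case (1) it is $\mathrm{SU}(n)$ on $\mathbb{C}^n$, not $\mathrm{SU}(n-1)$ on $\mathbb{C}^{n-1}$; in case (3) it is $\mathrm{Sp}(n)\cdot\mathrm{Sp}(1)$ resp.\ $\mathrm{Sp}(n)\cdot\mathrm{U}(1)$ on $\mathbb{H}^n$; in case (5) the tangent space is $\mathbb{C}^{2n}\cong\mathbb{H}^n$, not $\mathbb{C}\oplus\mathbb{H}^n$; in case (6) it is the sixteen-dimensional spin representation $\Delta_9$ of $\mathrm{Spin}(9)$, not an $\mathbb{R}^8$). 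Since you are reading the correct rows of Table 1 and only the real/complex/quaternionic type enters the criterion, none of these slips changes any verdict, and your direct treatment of the exception $G=\mathrm{SU}(2)$ via $\mathfrak{t}\cap\mathfrak{h}=0$ is a valid alternative to the paper's observation that $\mathrm{SU}(2)\cong\mathrm{Sp}(1)$ acting on $\mathbb{C}^2\cong\mathbb{H}$ is precisely the quaternionic case.
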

\begin{proof}
The rank condition in Theorem \ref{thm:gkmconditioncohom1} is satisfied for all 
eight actions. By transitivity of the Weyl group action 
on the weights of the isotropy $G$-representation at the unique $G$-fixed point,
the condition that no element of $\Delta_G$ vanishes on 
${\mathfrak{t}}\cap {\mathfrak{h}}$ is 
equivalent to the condition that no weight is a multiple of a root of $G$. By Proposition \ref{prop:cohom1repweylgroup}, 
this is true if and only if 
this representation is not the standard representation of ${\mathrm{Sp}}(n)$ on 
${\mathbb{H}}^n$. Notice that in case (1) the group $G={\mathrm{SU}}(2)$ is isomorphic to ${\mathrm{Sp}}(1)$.
\end{proof}

\begin{remark}
The spaces acted on in these eight examples are compact rank one symmetric 
spaces. As such, all of them are homogeneous spaces of equal rank, and the 
respective maximal tori act in a GKM fashion by Theorem \ref{thm:ghzhomspace}. 
The corresponding GKM graphs were described explicitly in \cite{oli_wiemeler_1}.

The quaternionic projective space ${\mathbb{H}}P^n$ is a positive quaternion
K\"ahler manifold, with the transitive action of ${\mathrm{Sp}}(n+1)$ by quaternionic K\"ahler
automorphisms. The actions in (2) and (3) are actions of subgroups of 
${\mathrm{Sp}}(n+1)$, and as such also by quaternionic K\"ahler automorphisms. Notice that this does 
not contradict the fact that in case (3) the isotropy representations of the 
torus $T$ are not quaternionic: while quaternionic K\"ahler automorphisms just leave invariant the bundle of quaternionic structures, a quaternionic representation is required to commute with the action of the quaternions.
\end{remark}

Let us go through the list of cohomogeneity one GKM actions on simply-connected 
manifolds with one fixed point, and determine the corresponding GKM graphs. On ${\mathbb{C}}P^n$ 
there are three types of actions, those in (1) and the one in (5). One of these is given by the group diagram
\[({\mathrm{U}}(n),{\mathrm{U}}(n),{\mathrm{U}}(n-1){\mathrm{U}}(1),{\mathrm{U}}(n-1));\]
the action of the diagonal maximal torus of ${\mathrm{U}}(n)$ being the standard toric action on 
${\mathbb{C}}P^n$. Hence it is of GKM type (alternatively, use Proposition \ref{prop:gkmwithfp}). The GKM graph of this action is the one-skeleton of a 
simplex, and the labels are well-known  
(see Figure \ref{fig:graphcpn} for the GKM graph of ${\mathbb{C}}P^3$). 
\begin{figure}[htb]
\begin{center}
\begin{tikzpicture}[scale=1.45]
\draw[dotted] (2.5,1) -- node[left, yshift = 0.3cm, rotate=21.8014] {$e_1$} (0,0) -- node[left, yshift = -0.3cm, rotate=-21.8014] {$e_2$} (2.5,-1)
              (0,0) -- node[xshift=0.3cm, yshift=0.2cm] {$e_3$} (2, 0);
\draw (2.5,1) -- node[right] {$e_1-e_2$} (2.5,-1) -- node[left, xshift=0.1cm, yshift=-0.6cm, rotate=-63.43495] {$e_2-e_3$} (2, 0) -- node[left, xshift=-0.7 cm, yshift = 0.3cm, rotate=63.43495] {$e_1-e_3$} cycle;
\filldraw (0,0) circle (1.5pt) node[left, xshift=-0.1cm] {$[e]$}
          (2.5,1) circle (1.5pt) node[right, xshift=0.1cm] {$[\sigma_{13}]$}
          (2.5,-1) circle (1.5pt) node[right, xshift=0.1cm] {$[\sigma_{23}]$}
          (2,0) circle (1.5pt) node[right, xshift=0.15cm] {$[e]$};
\end{tikzpicture}\hspace{1.5 cm}
\caption{GKM graph of ${\mathbb{C}}P^3$ with group diagram $(\mathrm{U}(3),  
\mathrm{U}(3), \mathrm{U}(2)\mathrm{U}(1), \mathrm{U}(2))$
}\label{fig:graphcpn}
\end{center}
\end{figure}
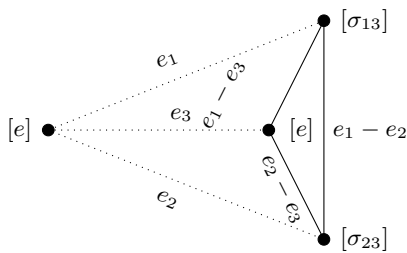

Let us check how this is compatible with our description of the graph.
The roots of ${\mathrm{U}}(n)$ are $e_i-e_j$, $1 \leq i < j \leq n$, and those that are not roots of ${\mathrm{U}}(n-1){\mathrm{U}}(1)$ are $e_i-e_n$, $1\leq i \leq n-1$. The singular orbit $G/K^+$ is a single $T$-fixed point. The singular orbit $G/K^-$ is the homogeneous space $\mathbb{C}P^{n-1}$, with the $T$-fixed points identified with the quotient of permutation groups $\W(G)/\W(K^-) = S_{n}/S_{n-1}$ of cardinality $n$. The labels of the graph of $G/K^-$ at the identity coset are exactly $\pm (e_i-e_n)$, and the remaining ones are obtained by applying the permutation group $S_n$.

The normal edge connecting $[e] \in \W(G)/\W(K^+)$ to $[e] \in \W(G)/\W(K^-)$ 
has label $\pm e_n$, as this is the differential of the character 
$T \to T/(T \cap H)$. The labels of the other normal edges are again obtained by applying $S_n$.

The other two actions on ${\mathbb{C}} P^n$, see items (1) and (5), 
are restrictions of the action above, so the graph is the same, with the labels  
restricted appropriately (as long as the action is still GKM, i.e.\ if the group is not ${\mathrm{SU}}(2)$).  

For item (3) and (6), the maximal torus of the acting group is the same as that 
of the 
homogeneous description \[
                         {\mathbb{H}}P^n = 
{\mathrm{Sp}}(n+1)/{\mathrm{Sp}}(n){\mathrm{Sp}}(1),
                        \]
and 
\[
           {\mathbb{O}}P^2 = {\mathrm{F}}_4/{\mathrm{Spin}}(9).
                        \]
So the graphs can be determined via Theorem \ref{thm:ghzhomspace}, as was done in \cite[Section 4]{oli_wiemeler_1}. See Figure~\ref{fig:graphhpn}, and for the labels, see \cite{oli_wiemeler_1}. Alternatively, one could apply our algorithm as in the case of ${\mathbb{C}}P^n$ above.

\begin{figure}[htb]
\begin{center}
\begin{tikzpicture}[scale=1.6]
\draw[dotted] (0,0) .. controls (0.9,-0.6) .. (2,-1)
               (0,0) .. controls (1.05,-0.4) .. (2,-1)
               (0,0) .. controls (0.9,0.6) ..  (2,1)
               (0,0) .. controls (1.05, 0.4) .. (2,1)
              (0,0) .. controls (0.8,0.1) .. (1.5, 0)
              (0,0) .. controls (0.8,-0.1) .. (1.5, 0);
\draw (2,1) .. controls (1.9,0) .. (2,-1)
       (2,1) .. controls (2.1,0) .. (2,-1)
        (2,-1) .. controls (1.82,-0.41) .. (1.5, 0) 
        (2,-1) .. controls (1.71,-0.62) .. (1.5, 0) 
        (2,1) .. controls (1.82, 0.41) .. (1.5, 0)
         (2,1) .. controls (1.71, 0.62) .. (1.5, 0);
\filldraw (0,0) circle (1.2pt)
          (2,1) circle (1.2pt)
          (2,-1) circle (1.2pt)
          (1.5, 0) circle (1.2pt);
\end{tikzpicture}\hspace{1.5 cm}
\begin{tikzpicture}[scale=1.6]
\draw[dotted, rotate around={-60:(2,1)}]
      (2,1) .. controls (1.93,0.5) and (1.93,-0.5) .. (2,-1)
      (2,1) .. controls (1.8,0.5) and (1.8,-0.5) .. (2,-1)
       (2,1) .. controls (2.08,0.5) and (2.08,-0.5) .. (2,-1)
       (2,1) .. controls (2.21,0.5) and (2.21,-0.5) .. (2,-1);
 \draw[dotted, rotate around={60:(2,-1)}]
      (2,1) .. controls (1.93,0.5) and (1.93,-0.5) .. (2,-1)
      (2,1) .. controls (1.8,0.5) and (1.8,-0.5) .. (2,-1)
       (2,1) .. controls (2.08,0.5) and (2.08,-0.5) .. (2,-1)
       (2,1) .. controls (2.21,0.5) and (2.21,-0.5) .. (2,-1);      
\draw (2,1) .. controls (1.93,0.5) and (1.93,-0.5) .. (2,-1)
      (2,1) .. controls (1.8,0.5) and (1.8,-0.5) .. (2,-1)
       (2,1) .. controls (2.08,0.5) and (2.08,-0.5) .. (2,-1)
       (2,1) .. controls (2.21,0.5) and (2.21,-0.5) .. (2,-1);
\filldraw (0.267949192,0) circle (1.2pt)
          (2,1) circle (1.2pt)
          (2,-1) circle (1.2pt);
\end{tikzpicture}
\caption{On the left, the GKM graph of ${\mathbb{H}P}^3$ with group diagram 
$({\mathrm{Sp}}(3),{\mathrm{Sp}}(3),{\mathrm{Sp}}(2){\mathrm{Sp}}(1),{\mathrm{
Sp}}(2))$. On the right, the GKM graph of ${\mathbb{O}}P^2$ with group diagram $({\mathrm{Spin}}(9), {\mathrm{Spin}}(9), {\mathrm{Spin}}(8), {\mathrm{Spin}}(7))$
}\label{fig:graphhpn}
\end{center}
\end{figure}
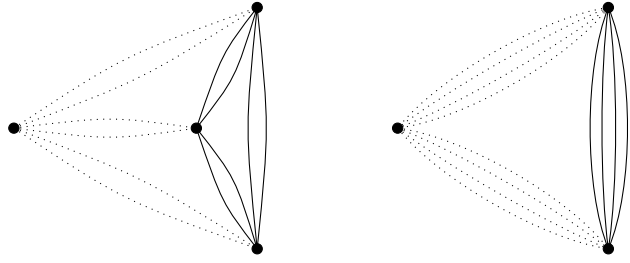

\begin{example}\label{ex:dim6}
Hoelscher classified compact simply-connected cohomogeneity-one manifolds in dimensions up to seven in \cite{hoelscher2} and \cite{hoelscherthesis}, under the assumption that the action is almost effective and nonreducible, i.e.\  the acting group does not have a proper normal subgroup that still acts with cohomogeneity one. Notice that this assumption is restrictive for our considerations: for example, in case (1) above, the maximal torus of ${\mathrm{U}}(2)$ acting on ${\mathbb{C}}P^2$ is of GKM type, but the ${\mathrm{U}}(2)$-action is reducible: the normal subgroup ${\mathrm{SU}}(2)$ still acts by cohomogeneity one, but the action of its maximal torus is not GKM. 

Let us consider this classification in dimensions up to six. In dimension two it is clear that there is only the standard circle action on $S^2$, which is of GKM type.

For dimension four consider \cite[Section 2.3]{hoelscherthesis} (notice that Parker \cite{parker} classified such actions without the assumption of simply-connectedness). The assumption on non-reducibility implies that the only possible acting groups are, up to covering, $S^3$, $S^3\times S^3$ and $S^3\times S^1$. In the case $S^3$ the maximal torus is one-dimensional and thus cannot act in a GKM fashion on a four-dimensional manifold. In the case $S^3\times S^3$ the action is an action with two fixed points on $S^4$ and falls into the category already described above, hence the action of the maximal torus is GKM. In the remaining case $S^3\times S^1$ there are two group diagrams $(S^3\times S^1, S^1\times S^1, S^1 \times S^1, S^1\times \{1\})$, and $(S^3\times S^1, S^3\times S^1, S^1 \times S^1, S^1\times \{1\})$, both of which are GKM (the second one also appears in the list of actions with fixed point above).

For dimension six, it is shown in \cite[Section 3]{hoelscher2} that the only occurring groups $G$, together with identity components $H_0$ of $H$ are those in Table \ref{Hoelscher1}.
\begin{center}

\begin{table}[htb]
\begin{tabular}{c|cc}
&$G$ & $H_0$ \\ 
\hline \\[-0.2cm]
$1_6$ & $S^3\times T^2$ & $\{1\}$\\[0.05cm]
$2_6$ & $S^3\times S^3$ & $\{(e^{ip\theta}, e^{iq\theta})\}$\\[0.05cm]
$3_6$ & $S^3\times S^3\times S^1$ & $T^2 \times \{1\}$\\[0.05cm]
$4_6$ & ${\mathrm{SU}}(3)$ & ${\mathrm{SU}}(2)$, ${\mathrm{SO}}(3)$\\[0.05cm]
$5_6$ & ${\mathrm{SU}}(3)\times S^1$ & ${\mathrm{U}}(2)\times \{1\}$\\[0.05cm]
$6_6$ & ${\mathrm{Sp}}(2)\times S^1$ & ${\mathrm{Sp}}(1){\mathrm{Sp}}(1)\times \{1\}$\\[0.05cm]
$7_6$ & ${\mathrm{Spin}}(6)$ & ${\mathrm{Spin}}(5)$ \\[0.2cm]
\end{tabular}
\caption{Pairs $(G,H_0)$ in Hoelscher's classification in dimension six}
\label{Hoelscher1}
\end{table}
\end{center}

Hoelscher also determined all possible intermediate groups $K^\pm$. For the first pair $(G,H_0)$ the rank condition in Theorem \ref{thm:gkmconditioncohom1} is not satisfied. For the second pair, the condition on the weights is not satisfied if $p= 0$ or $q= 0$. In all other cases, one checks directly that given intermediate groups $K^\pm$ satisfying the rank condition, the action of a maximal torus is GKM. Disregarding the examples that give an action on a sphere with two fixed points, we obtain the GKM group diagrams in Table \ref{Hoelscher2}, where the first column contains Hoelscher's notation for the respective type of action.
\end{example}

\begin{center}
\begin{table}[htb]
\begin{tabular}{c|ccccc}
&$G$ & $K^+$ & $K^-$ & $H$\\ \hline \\[-0.25cm]
$2_6A1$ & $S^3\times S^3$ & $S^1\times S^1$ & $S^1\times S^1$ & $\{(e^{ip\theta}, e^{iq\theta})\}\cdot {\mathbb{Z}}_n$ & $p,q\neq 0$\\[0.05cm]
$2_6C$ & $S^3 \times S^3$ & $S^1\times S^1$ & $\Delta S^3 \cdot {\mathbb{Z}}_n$ & $\Delta S^1\cdot {\mathbb{Z}}_n$ &  $n=1,2$\\[0.05cm]
$2_6D$ & $S^3\times S^3$ & $S^1\times S^1$ & $S^3\times S^1$ & $\{(e^{ip\theta},e^{i\theta})\}$ & $p\neq 0$ \\[0.05cm]
$2_6H$ & $S^3\times S^3$ & $S^3\times S^1$ & $\Delta S^3$ & $\Delta S^1$\\[0.05cm]
$2_6I$ & $S^3\times S^3$ & $S^3\times S^1$ & $S^3\times S^1$ & $\{(e^{ip\theta},e^{i\theta})\}$ & $p\neq 0$\\[0.05cm]
$2_6J$ & $S^3\times S^3$ & $S^3\times S^1$ & $S^1\times S^3$ & $\Delta S^1$\\[0.05cm]
$3_6$ & $S^3\times S^3\times S^1$ & $ S^1\times S^1\times S^1$ & $S^1\times S^1\times S^1$ & $S^1\times S^1\times \{1\}$\\[0.05cm]
$3_6$ & $S^3\times S^3\times S^1$ & $ S^1\times S^1\times S^1$ & $S^3\times S^1\times \{1\}$ & $S^1\times S^1\times \{1\}$\\[0.05cm]
$4_6$ & ${\mathrm{SU}}(3)$ & ${\mathrm{U}}(2)$ & ${\mathrm{U}}(2)$ & ${\mathrm{SU}}(2)\cdot {\mathbb{Z}}_n$\\[0.05cm]
$5_6$ & ${\mathrm{SU}}(3)\times S^1$ & ${\mathrm{U}}(2)\times S^1$ & ${\mathrm{U}}(2)\times S^1$ & ${\mathrm{U}}(2)\times \{1\}$\\[0.05cm]
$6_6$ & ${\mathrm{Sp}}(2)\times S^1$ & ${\mathrm{Sp}}(1){\mathrm{Sp}}(1)\times S^1$ & ${\mathrm{Sp}}(1){\mathrm{Sp}}(1)\times S^1$ & ${\mathrm{Sp}}(1){\mathrm{Sp}}(1)\times \{1\}$\\[0.05cm]
$6_6$ & ${\mathrm{Sp}}(2)\times S^1$ & ${\mathrm{Sp}}(2)\times S^1$ & ${\mathrm{Sp}}(1){\mathrm{Sp}}(1)\times S^1$ & ${\mathrm{Sp}}(1){\mathrm{Sp}}(1)\times \{1\}$\\[0,3cm]
\end{tabular}
\caption{Group diagrams of the GKM six-manifolds in Hoelscher's classification}
\label{Hoelscher2}
\end{table}
\end{center}

\begin{example}
Let us consider the six-dimensional example given by the group diagram
$$(G,K^+,K^-,H) = (S^3 \times S^3,
S^1\times S^1,S^3 \times S^1, \{(e^{ip\theta},e^{i\theta})\}),$$  where  $p \neq 0$, and $T=S^1\times S^1$ is the product of two copies of the standard maximal torus in $S^3$. This is the action of type $2_6D$ in Table \ref{Hoelscher2} above, and $N^6_D$ in \cite{hoelscher}.
The resulting cohomogeneity one manifold is a $\mathbb{C}P^2$-bundle over the two-sphere $S^2$, which is trivial if and only if $p\equiv 0 \,\,\, {\mathrm{mod}}\, 3$.

We have the Weyl groups $\W(K^+) = \{1\}$, $\W(K^-) = \mathbb Z_2$, $\W(G) = \mathbb Z_2 \times \mathbb Z_2$,
so we find four fixed points on the orbit $G/K^+$ and two fixed points on $G/K^-$. The four points
on $G/K^+$ give a square whose vertices are connected to the two fixed points in $G/K^-$. On the Lie algebra of $S^1$ we have the canonical linear form $\alpha$ sending $i$ to $1$. We obtain the two linear forms on the Lie algebra of $S^1\times S^1$ given by $e_1 = \alpha\oplus 0$ and $e_2 = 0\oplus \alpha$. The roots of $S^3\times S^3$ are $\pm 2e_1,\pm 2e_2$, and we denote the corresponding reflections by $\sigma_1$ and $\sigma_2$. They act on $\mathfrak t\cong {\mathbb{R}}^2$ via $\sigma_1(x,y) = (-x,y)$ and 
$\sigma_2(x,y) = (x,-y)$. Hence the vertex $[e]$ in $G/K^+$ is connected to $[\sigma_1]$
and $[\sigma_2]$ with labels $\pm 2e_1$ and $\pm 2e_2$ respectively, and these two points are both connected to $[\sigma_1\sigma_2]$
with edges labelled by $\pm \sigma_1(2e_2) = \pm 2e_2$ and $\pm \sigma_2(2e_1) = \pm 2e_1$. 
On $G/K^-$ we only have the two fixed points $[e]$ and $[\sigma_2]$ connected by an edge labelled by $\pm 2e_2$.
Notice that the edge connecting $[e] \in \W(G)/\W(K^+)$ and $[e] \in \W(G)/\W(K^-)$ is labelled by the weight $\pm (e_1-pe_2)$. The remaining labels can be found by applying the reflections $\sigma_1$
and $\sigma_2$ to $e_1-pe_2$, cf.\ Figure \ref{fig:gkm graphs}.

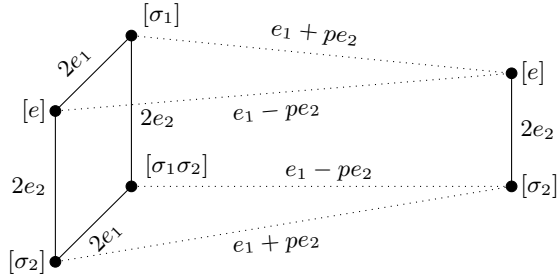
\begin{figure}[htb]
\label{6-sphere}
\begin{center}
\begin{tikzpicture}
\filldraw (0,0) circle (2pt) node[left] {$[\sigma_2]$}
          (0,2) circle (2pt) node[left] {$[e]$}
          (1,1) circle (2pt) node[above, xshift = 0.6cm] {$[\sigma_1\sigma_2]$}
          (1, 3) circle (2pt) node[above, xshift = 0.4cm] {$[\sigma_1]$}
          (6,1) circle (2pt) node[right] {$[\sigma_2]$}
          (6,2.5) circle (2pt) node[right] {$[e]$};
\draw (0,0) -- node[left] {$2e_2$} (0,2) -- node[left, yshift = 0.4cm, rotate=45] {$2e_1$} (1, 3) -- node[right, yshift = -0.1cm] {$2e_2$} (1,1) -- node[below, rotate=+45] {$2e_1$} cycle;
\draw (6,1) -- node[right] {$2e_2$} (6,2.5);
\draw[dotted] (0,0) -- node[left, xshift = 0.6cm, yshift = -0.2cm, rotate=4.76364] {$e_1+pe_2$} (6,1)
              (1,1) -- node[xshift = 0.1cm, yshift = +0.2cm] {$e_1-pe_2$} (6,1)
              (0,2) -- node[left, xshift = 0.6cm, yshift = -0.2cm, rotate=4.76364] {$e_1-pe_2$} (6,2.5)
              (1, 3) -- node[left, xshift = 0.6cm, yshift = +0.2cm, rotate=-5.7105] {$e_1+pe_2$} (6,2.5);
\end{tikzpicture}
\end{center}
\caption{GKM graph of the cohomogeneity one manifold with group diagram $(S^3 \times S^3,
S^1\times S^1,S^3 \times S^1, \{(e^{ip\theta},e^{i\theta})\})$}
\label{fig:gkm graphs}
\end{figure}
\end{example}

\begin{example}
The only three examples in Table \ref{Hoelscher2} in which $\rk{K^-}<\rk{G}$ are $2_6C$, $2_6H$, and the second $3_6$ case. The second of these was considered in Example \ref{ex:6-sphere}. Let us investigate the first, for $n=1$, i.e.\ the group diagram $(S^3\times S^3, S^1\times S^1, \Delta S^3, \Delta S^1)$. We use the notation from the previous example. Under the natural inclusion $\W(\Delta S^3)\to \W(S^3\times S^3)$ the nontrivial element is sent to $[\sigma_1\sigma_2]$, which shows, by the algorithm explained in Section \ref{subsec:secondcase}, that the normal edge emerging from $[e]$ has $[\sigma_1\sigma_2]$ as terminal vertex. We arrive at the GKM graph in Figure \ref{fig:graphdiag}.
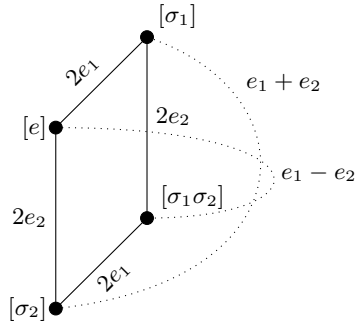
\begin{figure}[htb]
\begin{center}
\begin{tikzpicture}[scale=1.2]
\filldraw (0,0) circle (2pt) node[left] {$[\sigma_2]$}
          (0,2) circle (2pt) node[left] {$[e]$}
          (1,1) circle (2pt) node[above, xshift = 0.6cm] {$[\sigma_1\sigma_2]$}
          (1, 3) circle (2pt) node[above, xshift = 0.4cm] {$[\sigma_1]$};
\draw (0,0) -- node[left] {$2e_2$} (0,2) -- node[left, yshift = 0.4cm, rotate=45] {$2e_1$} (1, 3) -- node[right, yshift = 0.15cm] {$2e_2$} (1,1) -- node[below, rotate=+45] {$2e_1$} cycle;
\draw[dotted] (0,0) .. controls (2.8,0.3) and (2.8,2.4) .. (1,3);
\draw[dotted] (0,2) .. controls (3,2) and (3,1) .. node[right] {$e_1-e_2$} (1,1);
\node at (2.5,2.5) {$e_1+e_2$};
\end{tikzpicture}
\end{center}
\caption{GKM graph of the cohomogeneity one manifold with group diagram $(S^3\times S^3, S^1\times S^1, \Delta S^3, \Delta S^1)$}
\label{fig:graphdiag}
\end{figure}
\end{example}

\printbibliography
\end{document}